\title{A Proof of a Conjecture of Ohba}
\author{Jonathan A. Noel\thanks{Mathematical Institute, University of Oxford, UK.} \and Bruce A. Reed\thanks{School of Computer Science, McGill University, Canada.} \and Hehui Wu\thanks{Department of Mathematics and Statistics, Simon Fraser University, Canada.}}
\newtheoremstyle{case}{}{}{\normalfont}{}{\itshape}{:}{ }{}
\newtheorem{thm}{Theorem}[section]
\newtheorem{lem}[thm]{Lemma}
\newtheorem{prop}[thm]{Proposition}
\newtheorem{conj}[thm]{Conjecture}
\newtheorem*{repeatConj}{Conjecture~\ref*{OhbaConj}}
\newtheorem{claim}[thm]{Claim}
\newtheorem{cor}[thm]{Corollary}
\theoremstyle{definition}
\newtheorem{defn}[thm]{Definition}
\newtheorem*{ack}{Acknowledgements}
\theoremstyle{remark}
\newtheorem{rem}[thm]{Remark}
\theoremstyle{case}
\newtheorem{case}{Case}
\numberwithin{equation}{section}
\begin{document}

\maketitle

\begin{abstract}
We prove a conjecture of Ohba which says that every graph $G$ on at most $2\chi(G)+1$ vertices satisfies $\chi_\ell(G)=\chi(G)$.
\end{abstract}

%INTRO

\section{Introduction}
List colouring is a variation on classical graph colouring. An instance of list colouring is obtained by assigning to each vertex $v$ of a graph $G$ a list $L(v)$ of \emph{available colours}. An \emph{acceptable colouring} for $L$ is a proper colouring $f$ of $G$ such that $f(v)\in L(v)$ for all $v\in V(G)$. When an acceptable colouring for $L$ exists, we say that $G$ is \emph{$L$-colourable}. The \emph{list chromatic number} $\chi_\ell$ is defined in analogy to the chromatic number:
\[\chi_\ell(G)=\min\{k: G\text{ is } L\text{-colourable whenever } |L(v)|\geq k \text{ for all } v\in V(G)\}.\]
List colouring was introduced independently by Vizing~\cite{Vizing} and Erd\H{o}s, Rubin and Taylor~\cite{ERT} and researchers have devoted a considerable amount of energy towards its study ever since (see e.g.~\cite{Alon,Tuza,Krat,Woodall}).

A graph $G$ has an ordinary $k$-colouring precisely if it has an acceptable colouring for $L$ where $L(v)=\{1,2,\dots,k\}$ for all $v\in V(G)$. Therefore, the following bound is immediate:
\[\chi\leq \chi_\ell.\]
At first glance, one might expect the reverse inequality to hold as well. It would seem that having smaller intersection between colour lists could only make it easier to map adjacent vertices to different colours. However, this intuition is misleading; in reality, a lack of shared colours can have the opposite effect. In fact, there are bipartite graphs with arbitrarily large list chromatic number, as was shown in the original paper of Erd\H{o}s et al.~\cite{ERT}. 

On the other hand, there are many special graph classes whose elements are conjectured to satisfy $\chi_\ell=\chi$; such graphs are said to be \emph{chromatic-choosable}~\cite{OhbaOrig}. Probably the most well known problem in this area is the List Colouring Conjecture, which claims that every line graph is chromatic-choosable. This was independently formulated by many different researchers, including Albertson and Collins, Gupta, and Vizing (see~\cite{HC}). Galvin~\cite{Galvin} proved that line graphs of bipartite graphs are chromatic-choosable, and Kahn~\cite{Kahn} proved that every line graph satisfies $\chi_\ell\leq (1+o(1))\chi$. Other classes of graphs which have been conjectured to satisfy $\chi=\chi_\ell$ include claw-free graphs~\cite{ClawFree}, total graphs~\cite{Total} and squares of graphs~\cite{Square}; the last of these conjectures was recently disproved by Kim and Park~\cite{KP}. In fact, it was shown independently in~\cite{KKP} and~\cite{Yeager} that there does not exist an integer $k$ such that $G^k$ is chromatic-choosable for all $G$, answering a question of Zhu~\cite{ZhuPower}. 

In~\cite{OhbaOrig}, Ohba proved that $\chi_\ell(G+K_n)=\chi(G+K_n)$ for any graph $G$ and sufficiently large $n$, where $G+H$ denotes the join of $G$ and $H$. In their original paper~\cite{ERT}, Erd\H{o}s et al. proved that the complete multipartite graph $K_{2,2,\dots,2}$ is chromatic-choosable and the same was proved for $K_{3,2,2,\dots,2}$ by Gravier and Maffray in~\cite{GravMaf}. This paper concerns a conjecture of Ohba~\cite{OhbaOrig}, which implies the last three results. 
\begin{conj}[Ohba~\cite{OhbaOrig}]
\label{OhbaConj}
If $|V(G)|\leq 2\chi(G)+1$, then $G$ is chromatic-choosable.
\end{conj}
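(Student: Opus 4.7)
My plan is to study a minimum counterexample, extract structural information, and finish with a Hall-style matching argument. Suppose for contradiction that $(G,L)$ is a counterexample with $|V(G)|$ minimum and, subject to that, with $|E(G)|$ maximum; set $k=\chi(G)$ and $n=|V(G)|\le 2k+1$, and (by discarding redundant colours) assume $|L(v)|=k$ for every $v$. The first step is to show that $G$ is complete multipartite: for any non-edge $uv$, the graph $G+uv$ has the same vertex set and satisfies $|V(G+uv)|\le 2\chi(G+uv)+1$, so by the choice of $(G,L)$ either $G+uv$ is $L$-colourable (whence so is $G$, a contradiction) or $\chi(G+uv)>\chi(G)$, forcing $u$ and $v$ into the same class of every proper $k$-colouring of $G$. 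Hence non-adjacency is transitive and $G$ is a complete multipartite graph with parts $V_1,\dots,V_k$. Because $n\le 2k+1$, at most $\lfloor(k+1)/2\rfloor$ parts have size at least $3$, so roughly half of them are singletons --- the key structural leverage for what follows.

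With $G$ complete multipartite, an acceptable colouring corresponds to choosing pairwise disjoint colour sets $C_1,\dots,C_k$ such that $C_i$ meets $L(v)$ for every $v\in V_i$; equivalently, we need disjoint transversals of the hypergraphs $\{L(v):v\in V_i\}$. My plan is to build these sets via a Hall-type matching on the bipartite graph whose left side encodes the colouring requirements of each vertex (grouped by part) and whose right side is the colour universe $\bigcup_{v\in V(G)}L(v)$, with $v\sim c$ whenever $c\in L(v)$. A system of representatives respecting the part structure yields the colouring; any deficient family --- a set of vertices whose combined list universe is too small to admit such representatives --- has to be examined directly.

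The central obstacle is verifying the Hall-type condition under the hypothesis $n\le 2k+1$. I would isolate a minimal obstructing family $\mathcal{F}$ of parts and use the minimality of $(G,L)$ to delete either a vertex from a singleton part outside $\mathcal{F}$ or a colour appearing rarely inside $\mathcal{F}$, producing a smaller counterexample --- the desired contradiction. The extremal regime, where $n=2k+1$ and many parts have size $2$ with only a few singletons, is where the inequality $n\le 2k+1$ is tight; I expect the fine accounting here to be the main difficulty, since colours shared between the singleton lists and the lists inside larger parts must be tracked precisely to keep the required Hall-style inequalities from failing. Making this accounting survive the boundary case is what I anticipate being the crux of the proof.
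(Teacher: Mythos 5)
Your opening reduction to complete multipartite graphs is correct and matches the paper's first step, but everything after that is a plan rather than a proof, and the plan as stated does not go through. The central object you propose --- disjoint colour sets $C_1,\dots,C_k$ with $C_i$ meeting $L(v)$ for all $v\in V_i$ --- is a system of \emph{disjoint transversals}, and there is no Hall-type characterization of when such a system exists: Hall's Theorem governs matchings in bipartite graphs, not partitions of the colour universe into blocks each of which must cover a whole part. You cannot simply ``isolate a minimal obstructing family and delete a vertex or a colour''; the deficiency version of Hall's condition does not apply to this richer structure, and you give no mechanism for producing the smaller counterexample. You yourself flag the boundary case $n=2k+1$ as ``the crux'' and leave it unresolved --- but that case is not a boundary nuisance, it is the whole theorem: the paper proves (Corollary~\ref{2k+1}) that a minimal counterexample necessarily has $|V(G)|=2k+1$, so there is no non-extremal regime to fall back on.

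There is also a concrete error in your structural claim: from $n\le 2k+1$ you deduce that at most $\lfloor(k+1)/2\rfloor$ parts have size $\ge 3$ and conclude that ``roughly half of them are singletons,'' but this ignores parts of size $2$, which could account for nearly all parts. Ruling out parts of size $2$ (Corollary~\ref{size2}) and establishing that there are at least $\gamma=|V(G)|-|C|$ singletons (Proposition~\ref{gammasing}) are themselves consequences of the machinery you are missing. That machinery --- bounding the total number of colours by $2k$ via a matching saturating $C$ (Proposition~\ref{inj}), passing to a \emph{maximal} non-colourable list assignment, defining \emph{frequent} colours and \emph{near-acceptable} colourings, proving that a near-acceptable colouring can be repaired into an acceptable one (Lemma~\ref{na}), constructing one by a three-phase greedy procedure when $k$ frequent colours exist (Lemma~\ref{kfreqenough}), and the final counting with the parameters $\gamma$, $b$, $\beta$ --- is where all the difficulty lies, and none of it is present or substituted for in your proposal. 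What you have is a correct first reduction plus an accurate forecast of where the hard part is, not a proof.
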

Infinite families of graphs satisfying $|V(G)|=2\chi(G)+2$ and $\chi_\ell(G)>\chi(G)$ are exhibited in~\cite{Examples} and so Ohba's Conjecture is best possible with respect to the bound on $|V(G)|$. 

It is easy to see that the operation of adding an edge between vertices in different colour classes of a $\chi(G)$-colouring does not increase $\chi$ or decrease $\chi_\ell$. It follows that Ohba's Conjecture is true for all graphs if and only if it is true for complete multipartite graphs. Thus, we can restate Ohba's Conjecture as follows. 
\begin{repeatConj}[Ohba~\cite{OhbaOrig}]
\label{OhbaMulti}
If $G$ is a complete $k$-partite graph on at most $2k+1$ vertices, then $\chi_\ell(G)=k$. 
\end{repeatConj}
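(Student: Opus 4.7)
The plan is to proceed by contradiction with a minimum counterexample $(G,L)$, where $G$ is a complete $k$-partite graph with $|V(G)|\leq 2k+1$, $L$ is a $k$-list assignment admitting no acceptable colouring, and the triple $(k,|V(G)|,\sum_v|L(v)|)$ is lexicographically minimal. Writing the parts as $V_1,\ldots,V_k$, the key reformulation I would use is that an $L$-colouring of a complete multipartite graph is exactly a choice of pairwise disjoint sets $C_1,\ldots,C_k$ with $C_i\cap L(v)\neq\emptyset$ for every $v\in V_i$; since vertices in a single part are non-adjacent and may share a colour, $C_i$ only has to transversally meet every list in $V_i$, not have size $|V_i|$. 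The non-existence of such $C_i$'s is therefore the hypothesis I want to contradict.

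I would first harvest the standard reductions forced by minimality. If a singleton part $\{v\}$ has a colour in $L(v)$ appearing in no other list, colour $v$ with it and induct with $k-1$; so every colour of every singleton's list must be used elsewhere. More generally, if some colour $c$ lies in $L(w)$ for every $w$ in a part $V_i$, then colouring all of $V_i$ with $c$, deleting $c$ from the remaining lists, and restoring list sizes by the Erd\H{o}s--Rubin--Taylor trick yields a smaller instance. These reductions pin down the structure of $L$: the union $U=\bigcup_v L(v)$ must be small compared with $|V(G)|$, and lists from distinct parts must overlap heavily. I would then translate the non-existence of disjoint $C_i$'s into a Hall-type obstruction, applying a defect version of the marriage theorem to an auxiliary bipartite structure between colours and parts; the failure of Hall's condition should produce a minimal subset $T$ of parts and a colour set $S$ with $|S|<|T|$ such that every colour ``useful'' to some $V_i\in T$ lies in $S$. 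From such a witness one would expect either to colour the restricted instance directly or to identify a small set of vertices whose deletion produces a strictly smaller counterexample, contradicting minimality.

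The main obstacle, I expect, is the potentially large number of parts of size~$2$: since $|V(G)|\leq 2k+1$, up to $k$ parts can have size exactly two, and these parts are the most resistant to the reductions above because deleting one of their vertices still leaves a vertex demanding a colour. Handling them cleanly will likely require a matching argument pairing size-$2$ parts with shared colours drawn from the intersection of their two vertices' lists, together with a simultaneous two-vertex deletion that preserves the $2(k-1)+1$ bound when inducting on $k$. I expect the case analysis needed to control how parts of sizes $2$ and $3$ share colours---and in particular the extremal configuration in which almost every part has size~$2$---to constitute the bulk of the proof.
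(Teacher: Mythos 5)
Your opening moves --- a minimal counterexample, the reduction that a colour common to all lists of one part can be used on that whole part to drop to $k-1$ (the paper's Lemma~\ref{cap}), and a Hall-type analysis of the colour/vertex incidences forcing the total number of colours to be small --- reproduce the paper's preliminary Section~\ref{minprop}. Everything after that, however, is a statement of hope rather than an argument, and the entire engine of the actual proof is missing. A failed Hall condition between colours and parts does not by itself ``produce a strictly smaller counterexample.'' What the paper actually does is introduce \emph{frequent} colours (those on at least $k+1$ lists, or on the lists of at least $\gamma=|V(G)|-|C|$ singletons) and \emph{near-acceptable} colourings, in which a vertex may receive a colour outside its list provided that colour is frequent and is used on no other vertex. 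One must then prove three nontrivial things: that a near-acceptable colouring can be converted into an acceptable one (an exchange argument applying Hall's Theorem to the bipartite graph on colour classes versus colours); that $k$ frequent colours suffice to build a near-acceptable colouring via a three-phase greedy procedure; and --- the bulk of the work --- that a minimal counterexample equipped with a \emph{maximal} uncolourable list assignment really does admit $k$ frequent colours, which requires showing every frequent colour lies on every singleton's list and then two separate counting arguments over the singletons. None of these ideas, nor any substitute for them, appears in your outline, so there is no path from your reductions to a contradiction.

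Your diagnosis of where the difficulty lies is also inverted. Parts of size $2$ are the easiest to eliminate: by Lemma~\ref{cap} their two lists are disjoint, which forces $|C|=2k$ and $\gamma=1$, and the paper shows (Corollary~\ref{size2}) that a minimal counterexample has \emph{no} part of size $2$ at all; no elaborate pairing or two-vertex deletion is needed. The genuinely resistant configurations are those with many singleton parts and a few large parts, where one must manufacture many colours each appearing on sufficiently many singletons' lists; that is precisely what the counting in Section~\ref{add} accomplishes, and it is the step your plan gives no means of carrying out.
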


This conjecture has attracted a great deal of interest and substantial evidence has been amassed for it. This evidence mainly comes in two flavours: replacing $2k+1$ with a smaller function of $k$, or restricting to graphs whose stability number is bounded above by a fixed constant.

\begin{thm}
\label{known}
Let $G$ be a complete $k$-partite graph. If any of the following are true, then $G$ is chromatic-choosable.
\begin{enumerate}[(a)]
\item $|V(G)|\leq k+\sqrt{2k}$ \emph{(Ohba~\cite{OhbaOrig})};
\item $|V(G)|\leq \frac{5}{3}k -\frac{4}{3}$ \emph{(Reed and Sudakov~\cite{ReedSudakov})};
\item $|V(G)|\leq(2-\varepsilon)k-n_0(\varepsilon)$ for some function $n_0$ of $\varepsilon\in (0,1)$.  \emph{(Reed and Sudakov~\cite{ReedSudakov2})}.\footnote{Reed and Sudakov~\cite{ReedSudakov2} actually prove that there is a function $n_1(\varepsilon)$ such that if $n_1(\varepsilon)\leq |V(G)|\leq (2-\varepsilon)k$, then $G$ is chromatic-choosable. The original statement is equivalent to our formulation here.}
\end{enumerate}
\end{thm}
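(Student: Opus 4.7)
The plan is to prove each of the three bounds by first reducing to the case of complete multipartite graphs, using the standard reduction noted just before Conjecture~\ref{OhbaMulti}: adding an edge between two colour classes of an optimal proper colouring preserves $\chi$ and cannot decrease $\chi_\ell$. So throughout, fix a complete $k$-partite graph $G$ with parts $V_1,\dots,V_k$ and a list assignment $L$ with $|L(v)|\geq k$ for all $v$, and seek a proper $L$-colouring. In such a graph every independent set lies inside one part, so a proper $L$-colouring corresponds to choosing for each part $V_i$ an injection from $V_i$ into $\bigcup_{v\in V_i}L(v)$ in such a way that the images for different parts are pairwise disjoint. Hall's theorem will be the workhorse for producing these injections, and the difficulty lies in verifying Hall's condition in the presence of the "bad" large parts.

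For part (a), the hypothesis $|V(G)|\leq k+\sqrt{2k}$ has a very clean consequence: if $q$ parts have size $\geq 2$, then $|V(G)|\geq (k-q)+2q=k+q$, so $q\leq\sqrt{2k}$. I would separate the parts into singletons and non-singletons, pre-assign each non-singleton part a suitable set of $|V_i|$ colours (chosen greedily to minimise damage to other lists, possible because each list has $k$ colours while only $O(\sqrt{k})$ parts compete), and finish the singletons by a Hall's theorem argument on the bipartite graph from singletons to the leftover colours. The sparsity of non-singletons is exactly what makes the Hall deficiency bound work.

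For parts (b) and (c), the singleton/non-singleton split no longer gives enough room, and more refined tools are needed. I would proceed by induction on $k$, trying to isolate a \emph{reducible configuration}: a small subset $S$ of parts that can be coloured using colours confined to $\bigcup_{v\in S}L(v)$, leaving an instance on $k-|S|$ parts that still satisfies the hypothesis (with the lists shrunk by the used colours). Natural candidates for $S$ are: a pair of parts whose lists overlap heavily, a part of size $2$ whose two vertices share a common colour in their lists, or a family of parts whose union of lists is not too large. If no such reducible configuration exists, then the lists are "spread out," and one can either (for (b)) exploit this structure deterministically via a careful accounting argument on part sizes, or (for (c)) apply a probabilistic argument — assigning each part a random colour from its vertices' lists and showing via a local lemma or Talagrand-type concentration that the expected number of conflicts is small when $|V(G)|\leq(2-\varepsilon)k$ and $k$ is large in terms of $\varepsilon$.

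The main obstacle, I expect, will be the parts of size exactly $2$: they are the densest case in terms of contribution to $|V(G)|$ per colour consumed, and they are not eliminated by either the singleton reduction (which they evade) or a clean Hall's-theorem argument (since a list of size $k$ can barely cover two adjacent vertices if most colours are already committed elsewhere). Handling them is what distinguishes the linear improvement in (a) from the stronger constant in (b) and from the asymptotic $(2-\varepsilon)$-factor in (c), and I would expect the key lemma in each case to be a structural statement bounding how many size-$2$ parts can simultaneously "collide" on a single colour in the absence of a reducible configuration.
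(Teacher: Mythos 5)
First, a point of comparison: the paper does not prove Theorem~\ref{known} at all --- it is a survey of prior results, stated with citations to Ohba and to Reed and Sudakov, so there is no in-paper proof to measure your attempt against. Your proposal therefore has to stand on its own, and as written it does not: it is a research plan rather than a proof. Every step that carries actual mathematical weight is deferred (``I would proceed by induction,'' ``natural candidates for $S$ are,'' ``I expect the key lemma to be\dots''), and none of the claimed lemmas is stated precisely, let alone proved.

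To be concrete about where the gaps bite. For (a), your count of non-singleton parts ($q\leq\sqrt{2k}$) is correct, but the finishing Hall argument does not close as described: the non-singleton parts contain up to roughly $2\sqrt{2k}$ vertices, so greedily colouring them consumes up to that many colours, leaving each of the $k-q$ singletons with a residual list of size about $k-2\sqrt{2k}$, which is \emph{smaller} than the number of singletons. Hall's condition for a subset $S'$ of singletons with $|S'|>k-2\sqrt{2k}$ then requires a genuine argument about the union of the residual lists (this is the actual content of Ohba's proof), and you have not supplied it --- nor have you justified that the greedy pre-assignment to a non-singleton part is even possible when the vertices of that part have pairwise disjoint lists. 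For (b) and (c), nothing is verified: you do not exhibit a reducible configuration, you do not check that the reduced instance on $k-|S|$ parts still satisfies the hypothesis (note that the list sizes must drop to $k-|S|$ as well, so the configuration must absorb enough colours \emph{and} enough vertices per deleted part for the inequality $|V(G')|\leq\frac{5}{3}(k-|S|)-\frac{4}{3}$ to survive), and the probabilistic argument for (c) is only named, not executed. Since these are the theorems of~\cite{ReedSudakov} and~\cite{ReedSudakov2}, whose proofs occupy full papers, a sketch at this level of generality cannot be accepted as a proof; the honest course here is either to cite the original sources, as the paper does, or to carry out at least one of the three arguments in full.
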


\begin{defn}
A maximal stable set of a complete multipartite graph is called a \emph{part}.
\end{defn}

\begin{thm}
\label{known2}
Let $G$ be a complete $k$-partite graph on at most $2k+1$ vertices and let $\alpha$ be the size of the largest part of $G$. If $\alpha\leq 5$, then $G$ is chromatic-choosable 
{\rm (Kostochka, Stiebitz and Woodall~\cite{Kostochka}; He et al.~\cite{Alpha=3} proved the result for $\alpha\leq 3$).}
\end{thm}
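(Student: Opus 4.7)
The plan is to prove the statement by induction on $k$. Consider a minimum counterexample: a complete $k$-partite graph $G$ with $|V(G)| \le 2k+1$, maximum part size $\alpha \le 5$, and a list assignment $L$ with $|L(v)| = k$ for all $v \in V(G)$ such that $G$ is not $L$-colourable. Base cases of small $k$ are handled by direct verification, and standard reductions ensure $|L(v)|=k$ exactly.

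The central reduction is \emph{monochromatic part elimination}. If some part $P$ of size at least two admits a colour $c \in \bigcap_{v \in P} L(v)$, then colour every vertex of $P$ with $c$, delete $P$, and remove $c$ from the lists of all remaining vertices. The resulting graph is complete $(k-1)$-partite on at most $|V(G)| - 2 \le 2(k-1)+1$ vertices with lists of size $\ge k-1$ and $\alpha$ still at most $5$; by induction it is acceptably colourable, yielding a contradiction. Hence in a minimum counterexample, $\bigcap_{v \in P} L(v) = \emptyset$ for every part $P$ of size $\ge 2$. This already gives useful quantitative information: for $\alpha=3$, the identity $|A \cap B \cap C| \ge |A|+|B|+|C| - 2|A \cup B \cup C|$ forces $|L(V)| \ge 3k/2$, where $L(V) := \bigcup_v L(v)$, and analogous inclusion-exclusion bounds apply for $\alpha \in \{4,5\}$.

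After this reduction, lists within each large part pairwise overlap heavily but admit no universally common colour. The heart of the argument is a systems-of-distinct-representatives argument on a bipartite graph between the parts of $G$ and the colour set $L(V)$, where a colour $c$ is deemed \emph{usable} for a part $P$ if assigning $c$ to a designated subset of $P$ leaves a residual instance on $V(G) \setminus P$ that is still colourable under the induction hypothesis. Since $\alpha \le 5$, the admissible usage patterns within each part are few and can be enumerated. The main obstacle is the coupling between parts: choosing a colour for $P_i$ removes it from every other list, so the usability conditions must be enforced simultaneously, not independently. I expect the key inequality to be $\sum_i (|P_i| - 1) \le k+1$, which follows from $\sum_i |P_i| \le 2k+1$ and $\sum_i 1 = k$ and bounds the total ``excess'' across all parts by $k+1$; combined with a careful double count of list--vertex incidences and the $\alpha \le 5$ restriction, this should force either the existence of a usable colour for some part (reopening the monochromatic elimination), or else pin down the local list configuration tightly enough — most likely via a Hall-deficiency / recolouring swap between a deficient sub-collection of parts and a small set of colours — to derive a direct contradiction with the failure of colouring.
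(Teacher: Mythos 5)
First, a point of orientation: the paper does not prove Theorem~\ref{known2} at all --- it is quoted as prior work of Kostochka, Stiebitz and Woodall (with He et al.\ for $\alpha\le 3$), and it is subsumed by the paper's main result, whose proof uses no bound on $\alpha$ whatsoever. So the only meaningful comparison is with the paper's proof of the full conjecture, and your sketch diverges from it immediately after the first reduction.

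What you actually establish is essentially Lemma~\ref{cap} of the paper: in a minimal counterexample no part of size at least two has a colour common to all of its lists (your ``monochromatic part elimination'' is precisely the argument around (\ref{V})--(\ref{L}) with $\ell=1$), and your Bonferroni-type bound $|L(V)|\ge 3k/2$ for $\alpha=3$ is correct. Everything after that is a plan rather than a proof, and the gap is exactly where the difficulty lives. Your notion of a colour being \emph{usable} for a part is defined by the residual instance being colourable under the induction hypothesis, which you cannot certify without already solving the coloured subproblem; moreover, since the intersections $\bigcap_{v\in P}L(v)$ are empty after your reduction, a part cannot be monochromatic, so ``assigning $c$ to a designated subset of $P$'' leaves part of $P$ uncoloured and the residual instance no longer satisfies the inductive hypotheses ($|V(G)|$ may drop by less than $2$ per unit drop in list size). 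You acknowledge that usability for different parts is coupled, but you never resolve this, never enumerate the ``admissible usage patterns'' that $\alpha\le 5$ is supposed to make finite, and never combine $\sum_i(|P_i|-1)\le k+1$ with any double count to reach a contradiction; the proposal ends with ``this should force\dots or else pin down\dots''. For contrast, the paper closes the analogous gap by introducing \emph{near-acceptable} colourings (one vertex per frequent colour may be coloured off its list), proving $|C|\le 2k$ so that frequent colours exist, exploiting maximality of $L$, and applying Hall's theorem to the collapsed bipartite graph $B_f$ whose left side consists of the colour classes of a trial colouring rather than the parts of $G$. None of these ideas, nor any substitute for them, appears in your sketch, so as written it does not constitute a proof of the statement.
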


In this paper, we prove  Ohba's Conjecture. We divide the argument into three main parts. In Section~\ref{nasect}, we show how a special type of proper non-acceptable colouring of $G$ can be modified to yield an acceptable colouring for $L$. Then in Section~\ref{proc} we argue that, under certain conditions, it is possible to find a colouring of this type. Finally, in Section~\ref{add}, we complete the proof by showing that if Ohba's Conjecture is false, then there exists a counterexample which satisfies the conditions described in Section~\ref{proc}. 

For the proper non-acceptable colourings we consider in Section~\ref{nasect}, if $v$ is coloured with a colour $c$  not on $L(v)$, then we insist  that no other vertex is coloured with $c$, and that $c$ appears on the lists of many vertices.  This helps us to prove, in Section~\ref{nasect},  that we can  modify  such a colouring to obtain an acceptable colouring for $L$, as there  are at least $k$ colours which can be used on $v$ and many vertices on which $c$ can be used.

In Sections~\ref{proc} and~\ref{add}, we  combine  Hall's Theorem and various counting arguments to prove that such colourings exist for  a minimal counterexample to Ohba's Conjecture. In the rest of this section we provide some properties of such a minimal counterexample, which will help us do so. For one, we show that  for  any minimal counterexample  $G$, if  $G$ is not $L$-colourable,  then the total number of colours in the union of the lists of  $L$ is at most $2k$. This upper bound on the number of colours, foreshadowed in earlier work of Kierstead~\cite{Kierstead} and Reed and Sudakov~\cite{ReedSudakov2,ReedSudakov},  is crucial in that it  implies the existence of colours which appear in the lists of many vertices, which our approach requires.  

A variant of Ohba's Conjecture for on-line list colouring has been proposed~\cite{online, online2}. We remark that this problem remains open, since our approach, with its heavy reliance on Hall's Theorem, does not apply to the on-line variant.

\subsection{Properties of a Minimal Counterexample}
\label{minprop}

Throughout the rest of the paper we assume, to obtain a contradiction, that Ohba's Conjecture is false and we let $G$ be a minimal counterexample in the sense that $G$ is a complete $k$-partite graph on at most $2k+1$ vertices such that $\chi_\ell(G)>k$ and Ohba's Conjecture is true for all graphs on fewer than $|V(G)|$ vertices. Throughout the rest of the paper, $L$ will be a list assignment of $G$ such that $|L(v)|\geq k$ for all $v\in V(G)$ and $G$ is not $L$-colourable. Define $C:=\bigcup_{v\in V(G)}L(v)$ to be the set of all colours.

Let us illustrate some properties of a minimal counterexample. To begin, suppose that for a non-empty set $A\subseteq V(G)$ there is a proper colouring $g:A\to C$ such that \[g(v)\in L(v) \text{ for all }v\in A.\]
Such a mapping is called an \emph{acceptable partial colouring} for $L$. Define $G':=G-A$ and $L'(v):=L(v)-g(A)$ for each $v\in V(G')$. If for some $\ell\geq0$ the following inequalities hold, then we can obtain an acceptable colouring of $G'$ for $L'$ by minimality of $G$:
\begin{equation}\label{V} |V(G')|\leq 2(k-\ell)+1,\end{equation}
\begin{equation}\label{chi} \chi(G')\leq k-\ell,\text{ and}\end{equation}
\begin{equation}\label{L} |L'(v)|\geq k-\ell \text{ for all } v\in V(G'). \end{equation}
However, such a colouring would extend to an acceptable colouring for $L$ by colouring $A$ with $g$, contradicting the assumption that $G$ is not $L$-colourable. Thus, no such  $A$ and $g$ can exist. 

This argument can be applied to show that every part $P$ of $G$ containing at least two elements must have $\bigcap_{v\in P}L(v)=\emptyset$. Otherwise, if $c\in\bigcap_{v\in P}L(v)$, then the set $A:=P$ and function $g(v):=c$ for all $v\in A$ would satisfy (\ref{V}), (\ref{chi}) and (\ref{L}) for $\ell=1$, a contradiction. We have proven:

\begin{lem}
\label{cap}
If $P$ is a part of $G$ such that $|P|\geq2$, then $\bigcap_{v\in P}L(v)=\emptyset$.
\end{lem}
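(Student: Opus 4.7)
The plan is to directly follow the template set up in the paragraph immediately preceding the lemma, namely: use the acceptable partial colouring technique to reduce to a strictly smaller graph and invoke the minimality of $G$. Specifically, I would argue by contradiction: assume there is a part $P$ with $|P|\geq 2$ and some colour $c \in \bigcap_{v \in P} L(v)$. Then set $A := P$ and define $g : A \to C$ by $g(v) := c$ for all $v \in A$. Since $P$ is a stable set, $g$ is a proper colouring of $A$, and since $c \in L(v)$ for every $v \in P$, $g$ is an acceptable partial colouring for $L$.

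Next I would verify the three inequalities (\ref{V}), (\ref{chi}), (\ref{L}) with $\ell = 1$ for $G' := G - A$ and $L'(v) := L(v) - \{c\}$. For (\ref{V}), since $|P| \geq 2$, we have $|V(G')| \leq (2k+1) - 2 = 2(k-1)+1$. For (\ref{chi}), removing an entire part of a complete $k$-partite graph yields a complete $(k-1)$-partite graph, so $\chi(G') = k-1$. For (\ref{L}), since $g(A) = \{c\}$, we have $|L'(v)| \geq |L(v)| - 1 \geq k-1$ for every $v \in V(G')$.

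By minimality of $G$, then, $G'$ admits an acceptable colouring for $L'$, and combining this with $g$ yields an acceptable colouring of $G$ for $L$, contradicting our standing assumption that $G$ is not $L$-colourable. There is no real obstacle here: the only thing to be slightly careful about is confirming that $\chi(G') = k-1$ rather than merely $\chi(G') \leq k-1$ (which is all that is actually required by (\ref{chi})), and the verification that $g$ is proper uses precisely the hypothesis that $P$ is a part (i.e.\ a stable set). The whole argument is three lines once the reduction machinery from the preceding paragraph is in place.
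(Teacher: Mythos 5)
Your proposal is correct and is exactly the argument the paper gives (in the paragraph immediately preceding the lemma): take $A:=P$, colour all of $P$ with the common colour $c$, and verify (\ref{V}), (\ref{chi}), (\ref{L}) with $\ell=1$ to invoke minimality. The extra details you supply in verifying the three inequalities are accurate and consistent with the paper's intent.
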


Many of our results are best understood by viewing an instance of list colouring in terms of a special bipartite graph. Let $B$ be the bipartite graph with bipartition $(V(G),C)$ where each $v\in V(G)$ is joined to the colours of $L(v)$. For $x\in V(G)\cup C$, we let $N_B(x)$ denote the neighbourhood of $x$ in $B$. Clearly a matching in $B$ corresponds to an acceptable partial colouring for $L$ where each colour is assigned to at most one vertex. Recall the classical theorem of Hall~\cite{Hall} which characterizes the sets in bipartite graphs which can be saturated by a matching.

\begin{thm}[Hall's Theorem~\cite{Hall}]
Let $B$ be a bipartite graph with bipartition $(X,Y)$ and let $S\subseteq X$. Then there is a matching $M$ in $B$ which saturates $S$ if and only if $\left|N_B(S')\right|\geq |S'|$ for every $S'\subseteq S$. 
\end{thm}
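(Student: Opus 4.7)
The statement is the classical marriage theorem, so my plan is the standard induction on $|S|$. The forward direction is free: a matching $M$ saturating $S$ restricts to an injection $S' \to N_B(S')$ for every $S' \subseteq S$, giving $|N_B(S')| \geq |S'|$.

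For the converse I would induct on $|S|$, with trivial base $|S| \leq 1$. For $|S| \geq 2$, I split according to whether Hall's condition is tight somewhere. In the \emph{slack case}, where $|N_B(S')| \geq |S'|+1$ for every non-empty $S' \subsetneq S$, pick any $v \in S$ and any neighbour $y$ of $v$, match $v$ to $y$, and apply induction in $B - \{v,y\}$ to $S \setminus \{v\}$: for every non-empty $T \subseteq S \setminus \{v\}$, the slack gives $|N_{B-\{v,y\}}(T)| \geq |N_B(T)| - 1 \geq |T|$. In the \emph{tight case}, where some non-empty $S' \subsetneq S$ has $|N_B(S')| = |S'|$, apply induction to $S'$ to obtain a matching $M_1$ saturating $S'$; since $|N_B(S')| = |S'|$, $M_1$ uses exactly the set $N_B(S')$. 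Set $B' := B - (S' \cup N_B(S'))$; I claim Hall's condition holds for $S \setminus S'$ in $B'$, so induction yields a matching $M_2$ saturating $S \setminus S'$ in $B'$, and $M_1 \cup M_2$ is the desired matching of $S$ in $B$.

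The only step requiring real care is verifying Hall's condition in $B'$ in the tight case. For $T \subseteq S \setminus S'$, the disjoint decomposition $N_B(S' \cup T) = N_B(S') \sqcup (N_B(T) \setminus N_B(S'))$, combined with Hall's condition applied to $S' \cup T$ in $B$, gives
\[|N_{B'}(T)| = |N_B(S' \cup T)| - |S'| \geq (|S'| + |T|) - |S'| = |T|,\]
as required. Everything else is bookkeeping.
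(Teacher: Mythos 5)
Your proof is correct: it is the classical Halmos--Vaughan induction on $|S|$, splitting into the slack case (every non-empty proper $S'\subsetneq S$ has $|N_B(S')|\geq|S'|+1$) and the tight case, and the one delicate step --- verifying Hall's condition for $S\setminus S'$ in $B'=B-(S'\cup N_B(S'))$ via $|N_{B'}(T)|=|N_B(S'\cup T)|-|S'|\geq|T|$ --- is carried out properly. The paper offers no proof to compare against, since it simply cites Hall's Theorem as a classical result; your argument is a complete, standard, self-contained proof of it.
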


In the next proposition, we use the minimality of $G$ and Hall's Theorem to show that $B$ has a matching of size $|C|$. Slightly different forms of this result appear in the works of Kierstead~\cite{Kierstead} and Reed and Sudakov~\cite{ReedSudakov}.

\begin{prop}
\label{inj}
There is a matching in $B$ which saturates $C$.  
\end{prop}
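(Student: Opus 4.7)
My plan is a proof by contradiction via Hall's Theorem applied to the colour side of $B$. If no matching of $B$ saturates $C$, Hall's Theorem yields a nonempty set $C'\subseteq C$ with $|N_B(C')|<|C'|$; I would choose such $C'$ minimal under inclusion. By minimality, every proper subset of $C'$ satisfies Hall's condition, which first forces $|N_B(C')|=|C'|-1$ and then, by comparing sizes, $N_B(C'\setminus\{c\})=N_B(C')$ for every $c\in C'$.

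Fix any $c_0\in C'$. Hall's Theorem applied to $C'\setminus\{c_0\}$ produces a matching of $B$ saturating this set. Its $|C'|-1$ vertex endpoints lie in $N_B(C'\setminus\{c_0\})=N_B(C')$, which has exactly $|C'|-1$ elements, so the matching bijects $A:=N_B(C')$ with $C'\setminus\{c_0\}$ and is, in particular, an acceptable partial colouring $g\colon A\to C'\setminus\{c_0\}$. Note that $A\ne\emptyset$, since otherwise any $c\in C'$ would satisfy $N_B(c)=\emptyset$, contradicting $c\in\bigcup_{v\in V(G)}L(v)$.

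The final step is to invoke the reduction~(\ref{V})--(\ref{L}) on $G':=G-A$ with $L'(v):=L(v)\setminus g(A)$. The crucial observation is that every $v\in V(G')$ satisfies $L(v)\cap C'=\emptyset$ since $v\notin N_B(C')$; consequently $L'(v)=L(v)$ has size at least $k$. Combined with $|V(G')|\le 2k$ (since $|A|\ge 1$) and $\chi(G')\le\chi(G)=k$, the inequalities~(\ref{V}), (\ref{chi}) and~(\ref{L}) hold with $\ell=0$. Minimality of $G$ then produces an acceptable $L'$-colouring of $G'$, which concatenated with $g$ yields an acceptable $L$-colouring of $G$, contradicting the standing assumption that $G$ is not $L$-colourable.

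The main obstacle is the opening move: extracting from the minimality of $C'$ the two structural equalities $|N_B(C')|=|C'|-1$ and $N_B(C'\setminus\{c\})=N_B(C')$, and exploiting them to build a matching that bijects $A$ with $C'\setminus\{c_0\}$. Once the partial colouring $g$ is in place, the reduction step is essentially automatic because the lists of the vertices outside $A$ are disjoint from the colour set $C'$ from which $g$ draws its colours.
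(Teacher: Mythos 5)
Your proposal is correct and follows essentially the same route as the paper: take a minimal set violating Hall's condition on the colour side, extract a matching saturating all but one of its colours, observe that the matched vertex set is exactly the neighbourhood of the violating set, and feed the resulting acceptable partial colouring into the reduction (\ref{V})--(\ref{L}) with $\ell=0$. The only cosmetic difference is that you define $A$ as $N_B(C')$ rather than $N_B(C'\setminus\{c\})$, but as both arguments show these sets coincide.
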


\begin{proof}
Suppose to the contrary that no such matching exists. Then there is a set $T\subseteq C$ such that $\left|N_B(T)\right|<|T|$ by Hall's Theorem. Suppose further that $T$ is minimal with respect to this property. Now, for some $c\in T$ let us define $S:=T-c$ and $A:=N_B(S)$. By our choice of $T$, we observe that $\left|N_B(S')\right|\geq |S'|$ for every subset $S'$ of $S$. Thus, by Hall's Theorem there is a matching $M$ in $B$ which saturates $S$. Moreover, we have
\[|A|\geq|S|=|T|-1\geq\left|N_B(T)\right|\geq|A|.\]
This proves that $|A|=|S|$ and, since $N_B(T)$ is non-empty, it follows that $A$ is non-empty. In particular, $M$ must also saturate $A$. Let $g:A\to S$ be the bijection which maps each vertex of $A$ to the colour that it is matched to under $M$. Then clearly $g$ is an acceptable partial colouring for $L$. Since $A=N_B(S)$, every $v\in V(G)-A$ must have $L(v)\cap g(A)=L(v)\cap S=\emptyset$. Thus, $A$ and $g$ satisfy (\ref{V}), (\ref{chi}) and (\ref{L}) for $\ell=0$, a contradiction.
\end{proof}

Let us rephrase the above proposition into a form which we will apply in the rest of this section.

\begin{cor}
\label{injh}
There is an injective function $h:C\to V(G)$ such that $c\in L(h(c))$ for all $c\in C$.
\end{cor}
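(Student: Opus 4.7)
The plan is to derive this corollary immediately from Proposition~\ref{inj}, essentially by translating the existence of a matching saturating $C$ into the language of functions. By Proposition~\ref{inj}, there is a matching $M$ in $B$ such that every $c \in C$ is incident with exactly one edge of $M$. I would define $h(c)$ to be the unique vertex of $V(G)$ matched to $c$ under $M$, i.e., the endpoint in $V(G)$ of the edge of $M$ incident with $c$.

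It remains to verify the two required properties. Since $M$ is a matching, distinct colours $c_1, c_2 \in C$ are matched to distinct vertices, so $h(c_1) \neq h(c_2)$, showing that $h$ is injective. Moreover, the edge of $M$ joining $c$ to $h(c)$ is an edge of $B$, and by the construction of $B$ this means precisely that $c \in L(h(c))$.

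There is no real obstacle here: the entire content of the statement is already present in Proposition~\ref{inj}, and this corollary is just a convenient reformulation of that result that will be easier to quote in subsequent sections. The proof should be a one-paragraph argument, essentially a single sentence together with the two verifications above.
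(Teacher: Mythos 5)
Your proposal is correct and matches the paper's treatment: the paper gives no separate argument, simply calling the corollary a rephrasing of Proposition~\ref{inj}, and your translation of the saturating matching into the injective function $h$ is exactly that rephrasing.
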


The following is a simple, yet useful, consequence of this result.

\begin{cor}
\label{<}
$|C|<|V(G)|\leq 2k+1$.
\end{cor}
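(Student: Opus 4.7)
The second inequality $|V(G)| \leq 2k+1$ is immediate from the assumption that $G$ is a minimal counterexample to Ohba's Conjecture, so only the strict inequality $|C| < |V(G)|$ requires work. The plan is to combine Corollary~\ref{injh} with the hypothesis that $G$ is not $L$-colourable, using the injection $h$ in a ``flipped'' role: if it were actually a bijection, then its inverse would itself be an acceptable colouring, which cannot happen.

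More precisely, by Corollary~\ref{injh} there is an injective $h:C\to V(G)$ with $c\in L(h(c))$ for every $c\in C$. This already gives $|C|\leq |V(G)|$. I would then assume for contradiction that $|C|=|V(G)|$, so that $h$ is a bijection. Defining $f:V(G)\to C$ by $f:=h^{-1}$, for every $v\in V(G)$ we have $v=h(f(v))$ and hence $f(v)\in L(h(f(v)))=L(v)$; moreover, since $f$ is injective, distinct vertices (in particular, adjacent ones) receive distinct colours, so $f$ is a proper colouring. Thus $f$ is an acceptable colouring for $L$, contradicting the standing assumption that $G$ is not $L$-colourable.

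There is no real obstacle here; the only subtlety is recognising that Corollary~\ref{injh} supplies an injection in the ``wrong'' direction (colours to vertices), and that the extra assumption $|C|=|V(G)|$ upgrades it to a bijection which can then be inverted to produce a colouring. Everything else follows directly from $c\in L(h(c))$ and the injectivity of $h$.
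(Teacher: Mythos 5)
Your proof is correct and follows essentially the same route as the paper: both use the injection $h$ from Corollary~\ref{injh} to deduce $|C|\leq|V(G)|$, note that equality would make $h$ a bijection, and observe that $h^{-1}$ would then be an acceptable colouring for $L$, contradicting the standing assumption. The only difference is presentational (you spell out the injectivity-implies-proper step), so no further comment is needed.
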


\begin{proof} 
If $|V(G)|\leq |C|$, then the injective function $h:C\to V(G)$ from Corollary~\ref{injh} would be a bijection. The inverse of $h$ would be an acceptable colouring for $L$ since each colour $c\in C$ would appear on exactly one vertex of $G$ for which $c$ is available. This contradicts the assumption that $G$ is not $L$-colourable.  
\end{proof}

\begin{cor}
\label{disjoint}
If there are $u,v\in V(G)$ such that $L(u)\cap L(v)=\emptyset$, then $L(u)\cup L(v)=C$ and $|C|=2k$.
\end{cor}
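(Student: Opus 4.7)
The plan is to use the lower bound on the list sizes together with the upper bound on $|C|$ from Corollary~\ref{<} to squeeze everything into an equality. Concretely, since $L$ is the list assignment under consideration, we have $|L(u)|\geq k$ and $|L(v)|\geq k$ by hypothesis. Disjointness then gives
\[|L(u)\cup L(v)|=|L(u)|+|L(v)|\geq 2k.\]

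On the other hand, $L(u)\cup L(v)\subseteq C$, and Corollary~\ref{<} tells us that $|C|\leq 2k$. Chaining the inequalities yields $2k\leq |L(u)\cup L(v)|\leq |C|\leq 2k$, so every inequality in the chain is an equality. This immediately gives $|C|=2k$ and $L(u)\cup L(v)=C$, which is exactly the claim.

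There is no real obstacle here; the corollary is essentially a two-line pigeonhole consequence of Corollary~\ref{<}. The only thing to notice is that the hypothesis $|L(w)|\geq k$ (for all $w\in V(G)$) is part of the standing assumptions on $L$ set up in Section~\ref{minprop}, and in particular applies to $u$ and $v$.
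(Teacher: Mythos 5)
Your proof is correct and is essentially identical to the paper's: both derive $|L(u)\cup L(v)|\geq 2k$ from disjointness and the list-size lower bound, then use $|C|<|V(G)|\leq 2k+1$ from Corollary~\ref{<} to force equality throughout.
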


\begin{proof}
Since the list of every vertex has size $k$, if two lists $L(u)$ and $L(v)$ are disjoint, then $|C|\geq |L(u)\cup L(v)|\geq2k$. However, by Corollary~\ref{<} we have $|C|<|V(G)|\leq 2k+1$ and so it must be the case that $L(u)\cup L(v)=C$ and $|C|=2k$.
\end{proof}

By Corollary~\ref{<}, the difference between $|V(G)|$ and $|C|$ is always positive. Throughout the rest of the paper, it will be useful for us to keep track of this quantity.

\begin{defn}
$\gamma:=|V(G)|-|C|$. 
\end{defn}

We conclude this section with two other useful consequences of Proposition~\ref{inj}.

\begin{cor}
\label{2k+1}
$|V(G)|=2k+1$.
\end{cor}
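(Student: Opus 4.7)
My plan is to argue by contradiction: suppose $|V(G)|\leq 2k$ and derive a contradiction by splitting on the sizes of the parts of $G$. Since $G$ is complete $k$-partite with $k$ non-empty parts $P_1,\dots,P_k$, we have $|V(G)|\geq k$, and the hypothesis gives $\sum_{i=1}^k |P_i|\leq 2k$.

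First I would handle the case in which some part is a singleton $\{v\}$. The natural move is to colour $v$ with any $c\in L(v)$ and apply minimality to $G-v$. Taking $A:=\{v\}$, $g(v):=c$, and $\ell:=1$, I would verify each of (\ref{V})--(\ref{L}): $|V(G-v)|\leq 2k-1 = 2(k-1)+1$; $\chi(G-v)=k-1$, since deleting $v$ removes an entire colour class; and $|L'(u)|\geq k-1$, since at most one colour is subtracted from each list. Minimality then furnishes an acceptable $L'$-colouring of $G-v$, which extends via $g$ to an acceptable $L$-colouring of $G$, contradicting the choice of $L$.

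Second, if every part has size at least $2$, then $\sum_{i=1}^k|P_i|\geq 2k$, so combined with the assumption we must have $|P_i|=2$ for all $i$ and $|V(G)|=2k$. Here I would pick any part $\{u,v\}$; by Lemma~\ref{cap} we have $L(u)\cap L(v)=\emptyset$, so Corollary~\ref{disjoint} forces $|C|=2k$. But Corollary~\ref{<} gives $|C|<|V(G)|=2k$, the desired contradiction.

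I do not anticipate any genuine obstacle: both cases are one-line consequences of results already proved in this section. The only thing to watch is that the parameter $\ell=1$ makes all three inequalities (\ref{V}), (\ref{chi}), (\ref{L}) hold with exactly the margin that removing a singleton part affords, so the reduction to minimality is tight but valid.
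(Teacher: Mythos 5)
Your proof is correct and follows essentially the same route as the paper's: one case handled by removing a singleton part and invoking minimality with $\ell=1$, the other by combining Lemma~\ref{cap}, Corollary~\ref{disjoint} and Corollary~\ref{<}. The only cosmetic difference is that you split on ``a singleton exists'' versus ``all parts have size at least two,'' whereas the paper splits on whether a part of size two exists (and then notes a singleton must exist in the remaining case); both organizations are valid and rest on the same two ideas.
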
 

\begin{proof}
If $G$ has a part of size 2, then, by  Lemma~\ref{cap}, the lists of the two vertices it contains are disjoint. Hence by Corollaries~\ref{<} and~\ref{disjoint}, $|C|=2k$ and $|V(G)|=2k+1$. 
 
Otherwise $G$ does not contain any part of size two and so $G$ must contain a singleton part, say $\{v\}$.\footnote{Otherwise, all parts of $G$ would contain at least $3$ elements and so $3k\leq |V(G)|\leq 2k+1$, which would imply $k\leq 1$. Ohba's Conjecture is trivially true in this case.} If $|V(G)|\leq 2k$, then we can obtain an acceptable colouring by using an arbitrary colour of $L(v)$ to colour $v$ and applying minimality of $G$. The result follows. 
\end{proof}

\begin{prop}
\label{surj}
If $f:V(G)\to C$ is a proper colouring, then there is a proper surjective colouring $g:V(G)\to C$ such that for every $v\in V(G)$, either
\begin{enumerate}[(a)]
\item  $g(v)\in L(v)$, or\label{inf}
\item  $g^{-1}(g(v))\subseteq f^{-1}(f(v))$. \label{outf}
\end{enumerate}
\end{prop}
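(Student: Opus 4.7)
The plan is to construct $g$ from $f$ by an iterative recolouring procedure driven by the injection $h \colon C \to V(G)$ provided by Corollary~\ref{injh}.  Start with $\phi := f$ and repeatedly do the following: while there exists some $c \in C$ not in the image of $\phi$, set $\phi(h(c)) := c$.  At the end, take $g := \phi$.

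Proving that this terminates with a proper surjective colouring satisfying (a) or (b) comes down to checking three things along the iteration. First, \emph{properness} is preserved: when we relabel $h(c)$ with $c$, the unused colour $c$ is by assumption not the current $\phi$-colour of any vertex, so in particular not of any neighbour of $h(c)$, and thus no conflict is introduced. Second, \emph{the process terminates and ends surjectively}: once $h(c)$ has been recoloured to $c$, the colour $c$ is in the image of $\phi$ forever after (the only vertex whose reassignment could erase $c$ from the image is $h(c)$ itself, but since $h$ is injective, $h(c)$ is only reassigned during the step handling colour $c$, which happens at most once). So each vertex in $h(C)$ is touched at most once, which bounds the number of iterations by $|C|$; and the process halts precisely when $\phi$ is surjective.

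Third, and this is where the main case analysis happens, the invariant ``for every $v$, either $\phi(v) \in L(v)$ or $\phi^{-1}(\phi(v)) \subseteq f^{-1}(f(v))$'' is maintained. It holds at the start since $f$ makes option (b) trivially true. At a step that recolours $v := h(c)$ from its old colour to $c$: the vertex $v$ itself now satisfies (a) because $c \in L(h(c))$ by the choice of $h$. For any other vertex $u$, the new $\phi$-colour class of $u$ is contained in its old $\phi$-colour class (indeed, it is either unchanged or has lost $v$), so if $u$ previously satisfied (b), it still does; and (a) is obviously preserved since $u$'s colour didn't change. One small check is that we never accidentally \emph{add} $v$ to the class of $u$ and thereby break (b): this would require $c$ to equal the current $\phi$-colour of $u$, but $c$ was chosen unused, so this cannot happen.

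I do not expect a serious obstacle here — the argument is essentially bookkeeping. The only subtlety is to keep $f$ (used to define condition (b)) strictly distinct from the evolving colouring $\phi$, and to notice that the invariant for (b) is preserved precisely because the operation can only \emph{shrink} existing colour classes while creating a new singleton class at $v$, and the singleton class at $v$ is absorbed by (a) rather than (b).
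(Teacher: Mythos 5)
Your proposal is correct and is essentially the paper's argument in iterative form: the paper chooses $g$ to maximize the number of colours $c$ with $g(h(c))=c$ and derives a contradiction from non-surjectivity by performing exactly your recolouring step once, whereas you iterate that step and supply a termination argument. The key move (recolour $h(c)$ to an unused colour $c$) and the verifications of properness and of the invariant (a)/(b) are the same in both.
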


\begin{proof}
Let $h:C\to V(G)$ be a function as in Corollary~\ref{injh}. Given a proper colouring $g:V(G)\to C$ and a colour $c\in C$, we say that $g$ \emph{agrees} with $h$ at $c$ if $g(h(c))=c$. 

Now, let $g:V(G)\to C$ be a proper colouring in which every vertex $v\in V(G)$ satisfies either (\ref{inf}) or (\ref{outf}) and, subject to this, the number of colours $c\in C$ at which $g$ agrees with $h$ is maximized. We show that $g$ is surjective. Otherwise, let $c'\in C-g(V(G))$ be arbitrary and define a colouring $g':V(G)\to C$ as follows:
\[g'(v)=\left\{\begin{array}{ll}	c' 		& \text{if }v=h(c'),\\
																	g(v)	&	\text{otherwise}.\end{array}\right.\]
																	
Clearly $g'$ is proper since $g$ does not map any vertex to $c'$. Moreover, $g'$ agrees with $h$ at $c'$ and at every colour at which $g$ agrees with $h$. Let us show that every vertex $v$ of $G$ satisfies either (\ref{inf}) or (\ref{outf}) for $g'$, which will contradict our choice of $g$ and complete the proof.

In the case that $v=h(c')$, then we have $g'(v)=c'\in L(v)$ and so (\ref{inf}) is satisfied for $v$. Now, suppose that $v\neq h(c')$ and $g'(v)\notin L(v)$. Since every vertex $w\neq h(c')$ satisfies $g'(w)=g(w)\neq c'$, we see that 
\[g'^{-1}(g'(v))=g^{-1}(g(v))-h(c')\subseteq f^{-1}(f(v))\]
and so (\ref{outf}) is satisfied for $v$. The result follows. 
\end{proof}

\section{Bad Colourings With Good Properties}
\label{nasect}

In this section, we show that certain types of non-acceptable colourings can be modified to produce acceptable colourings. The following definitions describe the types of colourings that we are interested in. Say that a vertex $v\in V(G)$ is a \emph{singleton} if $\{v\}$ is a part of $G$. Recall that $\gamma = |V(G)|-|C|>0$.

\begin{defn}
A colour $c\in C$ is said to be 
\begin{itemize}
\item\emph{globally frequent} if it appears in the lists of at least $k+1$ vertices of $G$. 
\item\emph{frequent among singletons} if it appears in the lists of at least $\gamma$ singletons of $G$. 
\end{itemize}
If $c$ is either globally frequent or frequent among singletons, then we say that $c$ is \emph{frequent}.
\end{defn}

\begin{defn}
A proper colouring $f:V(G)\to C$ is said to be \emph{near-acceptable} for $L$ if for every vertex $v\in V(G)$ either
\begin{itemize}
\item $f(v)\in L(v)$, or
\item $f(v)$ is frequent and $f^{-1}(f(v))=\{v\}$.
\end{itemize}
\end{defn}

Suppose that $f$ is a proper colouring of $G$ and let $V_f:=\left\{f^{-1}(c): c\in C\right\}$ be the set of colour classes under $f$. Generalizing the construction of $B$ in Section~\ref{minprop}, we define $B_f$ to be a bipartite graph with bipartition $\left(V_f,C\right)$ where each colour class $f^{-1}(c)\in V_f$ is joined to the colours of $\bigcap_{v\in f^{-1}(c)}L(v)$.\footnote{One can think of this construction as taking the graph $G$ and collapsing each colour class of $f$ into a single vertex. Each collapsed vertex is then assigned a list which is the intersection of the lists of all vertices in its corresponding colour class.} A matching in $B_f$ corresponds to a partial acceptable colouring for $L$ whose colour classes are contained in $V_f$. We use this observation to prove the following.

\begin{lem}
\label{na}
If there is a near-acceptable colouring for $L$, then there is an acceptable colouring for $L$.
\end{lem}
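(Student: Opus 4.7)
The plan is to produce an acceptable colouring by finding a matching $M$ in $B_f$ that saturates the set $\mathcal{V}^{*}$ of non-empty colour classes of $f$. Since the colour matched to any $U\in\mathcal{V}^{*}$ lies in $\bigcap_{u\in U}L(u)$, and distinct classes receive distinct matched colours, assigning each $u\in U$ that matched colour immediately yields an acceptable colouring of $G$.

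To find $M$, I would verify Hall's condition on $\mathcal{V}^{*}$ inside $B_f$. Partition each $U\in\mathcal{V}^{*}$ into two types: \emph{good} classes, where $f(u)\in L(u)$ for every $u\in U$ (so that $f(U)\in N_{B_f}(U)$), and \emph{bad} classes, which by near-acceptability are singletons $\{v\}$ with $f(v)$ frequent and $f(v)\notin L(v)$. For any $\mathcal{S}\subseteq\mathcal{V}^{*}$ split as $\mathcal{S}_g\cup\mathcal{S}_b$, the distinct colours $\{f(U):U\in\mathcal{S}_g\}$ witness $|N_{B_f}(\mathcal{S})|\geq|\mathcal{S}_g|$, and any bad class $\{v\}\in\mathcal{S}_b$ gives $L(v)\subseteq N_{B_f}(\mathcal{S})$, so $|N_{B_f}(\mathcal{S})|\geq k$ whenever $\mathcal{S}_b\neq\emptyset$. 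Consequently, a set $\mathcal{S}$ violating Hall's condition must contain at least one bad class and satisfy $|\mathcal{S}|\geq k+1$; together with Corollary~\ref{2k+1}, this leaves at most $k$ vertices of $G$ outside $A:=\bigcup\mathcal{S}$.

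The main obstacle is converting the frequency of $c^{*}:=f(v)$ (for the chosen $\{v\}\in\mathcal{S}_b$) into a contradiction. If $c^{*}$ is globally frequent, at least $k+1$ vertices carry $c^{*}$ on their lists, and since at most $k$ lie outside $A$ (with $v$ itself excluded because $c^{*}\notin L(v)$), at least one such vertex $w$ must lie in $A\setminus\{v\}$. If $c^{*}$ is frequent among singletons, a parallel count locates a $G$-singleton $s\in A$ with $c^{*}\in L(s)$; since $s$ is adjacent to every other vertex, $\{s\}$ is automatically a singleton class of $f$. I would then analyse the $f$-class $U'$ of $w$ (or of $s$): when $U'$ is a singleton class of $f$, the membership $c^{*}\in N_{B_f}(U')$ supplies a colour of $N_{B_f}(\mathcal{S})$ outside $\{f(U):U\in\mathcal{S}_g\}$, pushing $|N_{B_f}(\mathcal{S})|\geq|\mathcal{S}|$ and contradicting the Hall violation; when $U'$ is larger, I would combine the injection $h:C\to V(G)$ of Corollary~\ref{injh} with a finer accounting of which lists inside $A$ omit $c^{*}$ to furnish either an augmenting alternating path in $B_f$ from $\{v\}$ or a fresh colour in $N_{B_f}(\mathcal{S})\setminus\{f(U):U\in\mathcal{S}_g\}$, closing the argument.
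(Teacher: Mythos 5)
Your setup (Hall's condition in $B_f$, the good/bad dichotomy, $|N_{B_f}(\mathcal{S})|\geq\max\{|\mathcal{S}_g|,k\}$ for a violating $\mathcal{S}$ containing a bad class, hence $|\mathcal{S}|\geq k+1$ and at most $k$ vertices outside $\bigcup\mathcal{S}$) matches the paper's opening moves. But the endgame has two genuine gaps. First, in the globally frequent case your strategy of exhibiting ``a fresh colour in $N_{B_f}(\mathcal{S})\setminus\{f(U):U\in\mathcal{S}_g\}$'' only raises the lower bound on $|N_{B_f}(\mathcal{S})|$ by one, which proves nothing when $\mathcal{S}$ contains two or more bad classes; and the vertex $w$ you locate need not help at all when its class is not a singleton. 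The argument that actually closes this case goes the other way: one first chooses $c^{*}$ so that $f^{-1}(c^{*})\in\mathcal{S}$ but $c^{*}\notin N_{B_f}(\mathcal{S})$ (possible by pigeonhole, a choice you never make), whence \emph{every} class of $\mathcal{S}$ contains a vertex whose list omits $c^{*}$; since $c^{*}$ is globally frequent there are at most $|V(G)|-(k+1)\leq k$ such vertices, and disjointness of the classes forces $|\mathcal{S}|\leq k$, contradicting $|\mathcal{S}|\geq k+1$. You never perform this count of vertices \emph{omitting} $c^{*}$.

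The second and more serious gap is the ``frequent among singletons'' case. Such a colour is only guaranteed to lie on the lists of $\gamma$ singletons, and $\gamma$ can equal $1$, so no counting of occurrences of $c^{*}$ inside $\bigcup\mathcal{S}$ can succeed; indeed, a singleton $s$ with $c^{*}\in L(s)$ forms a class that cannot lie in $\mathcal{S}$ once $c^{*}\notin N_{B_f}(\mathcal{S})$, which is the opposite of what your sketch needs. The paper resolves this case not by refuting the Hall violation but by exploiting it: after making $f$ surjective (Proposition~\ref{surj}) and choosing $\mathcal{S}$ to maximize $|\mathcal{S}|-|N_{B_f}(\mathcal{S})|$, one gets a matching saturating $V_f-\mathcal{S}$ avoiding $N_{B_f}(\mathcal{S})$ and at least $\gamma$ singleton classes outside $\mathcal{S}$; Claim~\ref{useagain} then builds an acceptable \emph{partial} colouring on the union of those classes together with the multi-element classes of $\mathcal{S}$, and colours the remaining singletons by invoking the minimality of the counterexample $G$ via the inequalities (\ref{V}), (\ref{chi}), (\ref{L}). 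This inductive mechanism is entirely absent from your proposal, and without it the lemma does not follow from matching arguments in $B_f$ alone.
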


\begin{proof}
Suppose that there is a near-acceptable colouring $f$ for $L$. A matching in $B_f$ which saturates $V_f$ would indicate an acceptable colouring for $L$ with the same colour classes as $f$. Therefore, we assume that no such matching exists. By Hall's Theorem, there is a set $S\subseteq V_f$ such that $\left|N_{B_f}(S)\right|<|S|$. 

Since $\left|N_{B_f}(S)\right|<|S|$ there must be a colour class $f^{-1}(c^*)\in S$ such that $c^*\notin N_{B_f}(S)$. In particular, we have $c^*\notin N_{B_f}(f^{-1}(c^*))$. It follows that there is a vertex $v$ such that $f(v)=c^*$ and $c^*\notin L(v)$. Since $f$ is near-acceptable for $L$, we must have that $c^*$ is frequent and $f^{-1}(c^*)=\{v\}$.

\begin{case}
 $c^*$ is globally frequent.
\end{case}

Since $f^{-1}(c^*)=\{v\}\in S$, we have that $N_{B_f}(S)\supseteq N_{B_f}(f^{-1}(c^*))= L(v)$ and so $\left|N_{B_f}(S)\right|\geq k$. This implies that
\begin{equation}\label{k+1}|S|>\left|N_{B_f}(S)\right|\geq k.\end{equation}

However, since $c^*\notin N_{B_f}(S)$, every colour class of $S$ must contain a vertex whose list does not contain $c^*$. Since $c^*$ is frequent, there are at most
\[|V(G)|-(k+1)\leq k\]
such vertices. Thus, $|S|\leq k$, contradicting (\ref{k+1}) and so the result holds in this case.

\begin{case}
\label{freqsing}
 $c^*$ is frequent among singletons. 
\end{case}

In order to complete the proof in this case, we impose some additional conditions on $f$ and $S$. 
\begin{itemize}
\item By Proposition~\ref{surj}, we may assume that $f$ maps surjectively to $C$.
\item We can assume that $S$ is chosen to be a set which maximizes $|S| - \left|N_{B_f}(S)\right|$ over all subsets of $V_f$.
\end{itemize}
By our choice of $S$, for any $T \subseteq V_f-S$ we must have $\left|N_{B_f}(T)-N_{B_f}(S)\right| \ge |T|$. It follows, by Hall's Theorem, that
\begin{equation}\label{match} \text{there is a matching $M$ in $B_f -N_{B_f}(S)$ saturating $V_f-S$.}\end{equation}
If $x$ is a singleton such that $c^*\in L(x)$, then $\{x\}$ is a colour class of $f$ which cannot be in $S$ since $f$ is proper and $c^* \not \in N_{B_f}(S)$. Since $c^*$ is frequent among singletons, we get that 
\begin{equation}\label{singgg}
\text{$V_f-S$ contains at least $\gamma$ singleton parts of $G$.}\end{equation}
The proof follows immediately from the following claim, which will be used again later in the paper.

\begin{claim}
\label{useagain}
Suppose that $f:V(G)\to C$ is surjective. If (\ref{match}) and (\ref{singgg}) hold, then there is an acceptable colouring for $L$. 
\end{claim}

\begin{proof}
We let $\ell$ be the number of  colour classes under $f$ with more than one element. Define $A$ to be the union of  the colour classes of $f$ which  either are  not in $S$ or contain more than one element and define $G':=G-A$.  We will find a partial acceptable colouring $g$ of $A$ satisfying  (\ref{V}), (\ref{chi}) and (\ref{L}) for  this definition of $\ell$. 

Since $A$ contains all colour classes of $f$ with more than one element, we have that $A$ contains at least $2\ell$ vertices and so (\ref{V}) holds for $\ell$. By (\ref{singgg}), we get that $\chi(G-A)\leq k-\gamma$. The fact that $f$ is surjective implies $\ell\leq \gamma$ and so (\ref{chi})  holds for $\ell$.  In order to show that (\ref{L}) holds, we will insist that our partial acceptable colouring satisfies the following:
\begin{equation*}\label{star}\tag{$*$}g(A)\text{ contains at most }\ell\text{ colours of }N_{B_f}(S).\end{equation*} 
For every vertex $w\in V(G)-A$, we see that $\{w\}$ is a colour class of $f$ contained in $S$; hence $L(w) \subseteq N_B(S)$. Thus, if (\ref{star}) holds, then (\ref{L}) holds for $\ell$. 

Therefore, to obtain a contradiction, we need only show that there is a partial acceptable colouring of $A$ satisfying (\ref{star}). To begin we note that the matching $M$ in (\ref{match}) defines a partial acceptable colouring $h$ for the set $A_1$ of  vertices  in the colour classes of $V_f-S$ using only colours in $C-N_{B_f}(S)$.  We also note that $f$ is a partial acceptable colouring when restricted to the vertices in the colour classes in $S$ with more than one element, since if $f(v) \not\in L(v)$ then $f^{-1}(f(v))=\{v\}$.  We define $g$ so that it agrees with $h$ on $A_1$ and agrees with $f$ on the rest of $A$. It is a partial acceptable colouring because, by definition,  
$g(A_1)$ is disjoint from $N_{B_f}(S)$ while  $g(A-A_1)$ is  contained in $N_{B_f}(S)$.  Furthermore, since $A-A_1$ consists of the union of at most $\ell$ colour classes of $f$, we see that (\ref{star}) holds for $g$ and we are done. 
\end{proof}
\end{proof}

\section{Constructing Near-Acceptable Colourings}
\label{proc}

In the previous section, we saw that finding an acceptable colouring for $L$ is equivalent to finding a near-acceptable colouring for $L$. However, in practice it can be much easier to construct a near-acceptable colouring than an acceptable colouring. In constructing a near-acceptable colouring, we need not worry about whether a frequent colour $c$ is available for a vertex $v$, provided that $v$ is the only vertex to be coloured with $c$. In this section, we exploit this flexibility to prove the following result.

\begin{lem}
\label{kfreqenough}
If $C$ contains at least $k$ frequent colours, then there is a near-acceptable colouring for $L$. 
\end{lem}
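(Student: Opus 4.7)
Let $F\subseteq C$ be a set of exactly $k$ frequent colours and let $P_1,\ldots,P_k$ denote the parts of $G$. My plan is to construct a near-acceptable colouring $f$ by placing each colour of $F$ on a single, carefully chosen vertex---one per part---so that these $k$ vertices become singleton colour classes bearing the $k$ frequent colours of $F$, and then colouring the remaining $k+1$ vertices with colours from $C\setminus F$ drawn from their own lists. The whole point of near-acceptability is that, in the first step, we do not need $\sigma(P_i)\in L(v_i)$ for the chosen representative $v_i\in P_i$: frequency alone, together with uniqueness in its colour class, is enough.

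The key tool for the first step is Hall's Theorem, applied to a bipartite graph $H$ between the parts $\{P_1,\ldots,P_k\}$ and $F$. I would set up the edges of $H$ so that $P_i\sim c$ encodes the requirement that $c$ can play its frequent role on some vertex of $P_i$ without destroying our ability to extend the colouring to the other vertices of $P_i$. Hall's Theorem would then produce a bijection $\sigma\colon\{P_i\}\to F$ along with a selection of representatives $v_i\in P_i$; defining $f(v_i):=\sigma(P_i)$ handles the near-acceptability condition at each $v_i$ automatically, because $\sigma(P_i)$ is frequent and by construction no other vertex will use it.

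It remains to colour the set $W:=V(G)\setminus\{v_1,\ldots,v_k\}$ of $k+1$ vertices, all of which lie in the non-singleton parts; these induce a smaller complete multipartite graph $G'$. I would do this using the shrunken lists $L'(w):=L(w)\setminus F$ and invoke the minimality of $G$: since $|V(G')|<|V(G)|$, Ohba's conjecture applies to $G'$, so an acceptable colouring for $L'$ exists provided that $|L'(w)|\geq\chi(G')$ for every $w\in W$ and $|V(G')|\leq 2\chi(G')+1$. The main obstacle is that, although $|L(w)|\geq k$, we also have $|F|=k$, so $L'(w)$ can in principle be empty if $L(w)\subseteq F$. The delicate step is therefore to arrange the Hall matching in such a way that every vertex $w$ whose list lies too deep inside $F$ is itself chosen as a representative. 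Verifying that Hall's condition for this refined bipartite graph holds is where the precise definitions of \emph{globally frequent} (appearing in at least $k+1$ lists) and \emph{frequent among singletons} (appearing in the lists of at least $\gamma=|V(G)|-|C|$ singletons), together with the assumption that $|F|\geq k$, must be deployed through a careful counting argument.
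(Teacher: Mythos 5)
Your plan is genuinely different from the paper's (which runs a three-phase greedy procedure: first a maximum acceptable partial colouring using only the non-frequent colours, then whole part-remainders coloured with single frequent colours, then an injective assignment of the leftover frequent colours to the stragglers), but as written it has gaps I do not see how to close. First, your reduction to $G'=G-\{v_1,\dots,v_k\}$ via minimality requires $|V(G')|\le 2\chi(G')+1$. Since every singleton part is removed entirely, $G'$ has $\chi(G')=b$ (the number of non-singleton parts) while $|V(G')|=k+1$, so you need $b\ge k/2$; this fails badly for, say, a graph with one part of size $k+2$ and $k-1$ singletons, where $b=1$. Nothing available at this point of the argument rules such graphs out. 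Second, the list condition $|L'(w)|\ge\chi(G')=b$ for every non-representative $w$ requires $|L(w)\cap F|\le k-b$, and a single part can contain several vertices violating this: Lemma~\ref{cap} only forces the intersection of \emph{all} lists within a part to be empty, so two vertices of the same part may both have $L=F$, and your scheme can absorb only one of them as the part's representative. Finally, the bipartite graph $H$ that is supposed to resolve both issues is never concretely defined, and the ``careful counting argument'' verifying Hall's condition --- which is the entire content of the lemma --- is deferred rather than given.

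The deeper structural problem is that your template is too rigid: it insists that each part contribute exactly one singleton colour class carrying a frequent colour and that every remaining vertex be coloured acceptably from $C\setminus F$. The paper's proof needs freedom in both directions: in its second phase an entire part-remainder (many vertices) receives a single frequent colour, and in its third phase one part can supply several singleton classes each carrying its own frequent colour (in the final case all $k+1$ leftover vertices absorb the $k$ colours of $F$, with one colour doubled on two vertices of the largest part). Rather than trying to repair the parts-to-colours matching formulation, I would start from the paper's idea of first using the non-frequent colours as heavily as possible and only then deciding how the frequent colours are distributed.
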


\begin{proof}
Let $F$ be a set of $k$ frequent colours and assume, to the contrary, that there does not exist a near-acceptable colouring for $L$. Our goal is to construct a near-acceptable colouring for $L$ by applying a three phase greedy procedure. In the first phase, choose a subset $V_1\subseteq V(G)$ and an acceptable partial colouring $f_1: V_1\to C-F$ such that $V_1$ contains as many vertices as possible and, subject to this, $V_1$ contains vertices from as many parts as possible. Before moving on, we prove the following claim. 

\begin{claim}
\label{2hit}
Every part of size two contains a vertex of $V_1$. 
\end{claim}

\begin{proof}
Suppose that $P=\{u,v\}$ is a part of size $2$ such that $P\cap V_1=\emptyset$. Then $L(u)\cap L(v)=\emptyset$ by Lemma~\ref{cap}, and so by Corollary~\ref{disjoint} we see that $L(u)\cup L(v)=C$ and $|C|=2k$. In particular, the image of $f_1$ must contain every colour $c\in C-F$ for, if not, we could use $c$ to colour one of $u$ or $v$, increasing the size of $V_1$. Since $|C|=2k$, we have $|C-F|=k$ which implies that $|V_1|\geq k$. 

If $|V_1|\geq k+1$, then $|V(G)-V_1|\leq |V(G)|-(k+1) =k=|F|$, and we can obtain a near-acceptable colouring for $L$ by mapping the vertices of $V(G)-V_1$ injectively to $F$. So, we must have $|V_1|=k$ and that every colour of $C-F$ is used by $f_1$ on a unique vertex of $V_1$. Since neither vertex of $P$ is in $V_1$ and $G$ has precisely $k$ parts, there must be a part $Q\neq P$ containing at least two vertices of $V_1$, say $x$ and $y$. However, since $L(u)\cup L(v)=C$, we can uncolour $x$ and use its colour to colour one of $u$ or $v$, which maintains the number of coloured vertices and increases the number of parts with a coloured vertex. This contradicts our choice of $V_1$ and completes the proof of the claim. 
\end{proof}

For each part $P$ let $R_P:=P-V_1$, the set of vertices which are not coloured by $f_1$. Label the parts of $G$ as $P_1,\dots,P_k$ so that $\left|R_{P_1}\right|\geq \dots\geq\left|R_{P_k}\right|$. The second phase of our colouring procedure is described as follows. For each part $P_i$, in turn, we try to colour $R_{P_i}$ with a frequent colour which has not yet been used and is available for every vertex of $R_{P_i}$. We terminate this phase when we reach an index $i$ for which this is not possible. Define
\[V_2:=\bigcup_{j=1}^{i}R_{P_j},\text{ and}\]
\[V_3:=\bigcup_{j=i+1}^{k}R_{P_j}\]
That is, $V_2$ is the set of vertices coloured by the second phase and $V_3$ is the set of vertices which must be coloured in the third. If $i=k$, then we have obtained an acceptable colouring for $L$ after the first two phases and we are done. So, we assume that $i<k$ and that there is no frequent colour which has not yet been used and is available for every vertex of $R_{P_{i+1}}$.

Let $U$ denote the set of colours of $F$ which have not been used in the second phase. We observe that $|U|=k-i$. If $|V_3|\leq k-i$, then in the third phase we simply map $V_3$ injectively into $U$, thereby obtaining a near-acceptable colouring for $L$. So, we can assume that $|V_3|\geq k-i+1$. In particular, this implies $\left|R_{P_{i+1}}\right|\geq2$ by our choice of ordering. Thus, again by our choice of ordering, we get that $|V_2|\geq \left|R_{P_{i+1}}\right|i \geq 2i$. Since $|V(G)|= 2k+1$ and $|V_3|\geq k-i+1$, we get
\[|V_1\cup V_2| = |V(G)|-|V_3| \leq (2k+1) - (k-i+1) = k+i.\] 
It follows that $|V_1|\leq k+i-|V_2|\leq k+i-2i=k-i$. 

Let us show that $|V_1|$ is exactly $k-i$. To do so, we use the fact that every colour in $U$ is absent from $L(v)$ for at least one $v\in R_{P_{i+1}}$. Since there are $k$ colours available for each vertex of $R_{P_{i+1}}$ and exactly $k$ colours in $F$, these absences imply that the colours of $C-F$ must appear at least $|U|=k-i$ times in the lists of vertices of $R_{P_{i+1}}$. Now if a colour $c\in C-F$ is available for $j>0$  vertices of $R_{P_{i+1}}$, then:
\begin{enumerate}[(i)]
\item\label{noti+1} $c$ was not used to colour any vertex of $P_{i+1}$. Otherwise, we could use $c$ to colour those $j$ vertices of $R_{P_{i+1}}$ for which it is available. This would contradict our choice of $V_1$. 
\item \label{>=j}at least $j$ vertices are coloured with $c$ in the first phase. Otherwise, we could uncolour the vertices that were coloured with $c$ and use $c$ to colour $j$ vertices of $R_{P_{i+1}}$ instead, again contradicting our choice of $V_1$. 
\end{enumerate}

Thus, since the colours of $C-F$ appear at least $k-i$ times in the lists of vertices in $R_{P_{i+1}}$, by (\ref{noti+1}) and (\ref{>=j}) we have that at least $k-i$ vertices of $V(G)-P_{i+1}$ were coloured in the first phase; that is, $|V_1|\geq k-i$. Since we have already proven that $|V_1|\leq k-i$, this implies  that
\begin{equation}\label{k-i}|V_1|=k-i, \text{ and}\end{equation}
\begin{equation}\label{no i+1}V_1\cap P_{i+1}=\emptyset.\end{equation}

Recall that, by Claim~\ref{2hit}, every part of size two intersects $V_1$. So, by (\ref{no i+1}), we must have $\left|R_{P_{i+1}}\right|=|P_{i+1}|\geq3$. By our choice of ordering, this implies that $|V_2|\geq \left|R_{P_{i+1}}\right|i\geq3i$, and so $|V_1|\leq k+i-3i=k-2i$. Thus, by (\ref{k-i}), we must have $k-2i\geq k-i$ which implies $i=0$. 

Therefore, we have $|V_1|=k$ and $|V_2|=0$, which implies that $U=F$, $|V_3|=k+1$, and $R_{P_1}=P_1$ by (\ref{no i+1}). At this point, our goal is to show that there exists a colour $c\in F$ which is available for two vertices $u,v\in P_1$. If such a colour exists, then, in the third phase of our procedure, we simply colour $u$ and $v$ with $c$ and map the vertices of $V_3-\{u,v\}$ to $F-c$ bijectively to obtain a near-acceptable colouring for $L$.

Recall that the number of times the colours of $C-F$ appear in lists of vertices in $P_1$ is at most the cardinality of $V_1$, which is exactly $k$. Since $|P_1|\geq3$ and each list has size at least $k$, the colours of $F$ must appear at least $2k$ times in the lists of vertices in $P_1$. Since $|F|=k$, there is a colour in $F$ (in fact, many) which is available for at least two vertices in $P_1$. This completes the proof. 
\end{proof}

\section{Adding Colours to the Lists}
\label{add}

From now on, we impose an additional  requirement on our list assignment $L$ for which there is no acceptable colouring. We insist that it is  maximal in the sense that increasing the size of any list makes it possible to find an acceptable colouring. That is, for any $v\in V(G)$ and $c\in C-L(v)$, if we define $L^*(v)=L(v)\cup\{c\}$ and $L^*(u)=L(u)$ for all $u\neq v$, then there is an acceptable colouring for $L^*$. Given this property, it is straightforward to prove that every frequent colour is available for every singleton. 

\begin{lem}
\label{common}
If $c\in C$ is frequent, then $c\in L(v)$ for every singleton $v$ of $G$. 
\end{lem}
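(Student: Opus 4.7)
The plan is to combine the maximality hypothesis on $L$ with Lemma~\ref{na}. Suppose for contradiction that $c \in C$ is frequent but $c \notin L(v)$ for some singleton $v$ of $G$. I would define the modified list assignment $L^*$ by $L^*(v) := L(v) \cup \{c\}$ and $L^*(u) := L(u)$ for every $u \neq v$. By maximality of $L$, there exists an acceptable colouring $f : V(G) \to C$ for $L^*$.

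The next step is to observe that $f(v) = c$. If instead $f(v) \in L(v)$, then since $L^*(u) = L(u)$ for all other $u$, $f$ would already be an acceptable colouring for $L$, contradicting the fact that $G$ is not $L$-colourable. Since $L^*(v) \setminus L(v) = \{c\}$, we must therefore have $f(v) = c \notin L(v)$.

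It remains to show that $f$ is in fact a near-acceptable colouring for $L$, at which point Lemma~\ref{na} provides an acceptable colouring for $L$ and the desired contradiction. For every $u \neq v$ we have $f(u) \in L^*(u) = L(u)$, so the first clause of the definition of near-acceptability is satisfied for such $u$. For $v$ itself, $f(v) = c$ is by hypothesis frequent, and since $v$ is a singleton part in the complete multipartite graph $G$, it is adjacent to every other vertex. The colour class $f^{-1}(c)$ is independent and contains $v$, hence must equal $\{v\}$, and the second clause of the definition is satisfied for $v$.

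The argument is essentially immediate: the only subtlety is recognising that enlarging a single list by $\{c\}$ forces any new acceptable colouring to place $c$ precisely at $v$, and that $v$ being a singleton in a complete multipartite graph automatically ensures $f^{-1}(c) = \{v\}$. Consequently I do not expect any real obstacle in writing out the details.
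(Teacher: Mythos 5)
Your proof is correct and follows exactly the paper's argument: add $c$ to $L(v)$, use maximality of $L$ to get an acceptable colouring for the enlarged lists, note it must assign $c$ to $v$ (and only to $v$, as $v$ is a singleton), and conclude via Lemma~\ref{na}. No issues.
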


\begin{proof}
Otherwise, add $c$ to the list of $v$. Since $L$ is maximal, there is an acceptable colouring for this modified list assignment. Since $G$ is not $L$-colourable, this colouring must use $c$ to colour $v$ and, since $v$ is a singleton, $v$ is the only vertex coloured with $c$. Therefore, this colouring is a near-acceptable colouring for $L$ and so by Lemma~\ref{na} it follows that $G$ is $L$-colourable, a contradiction. 
\end{proof}

Recall by Lemmas~\ref{na} and~\ref{kfreqenough} that there are fewer than $k$ frequent colours. We show now that this implies that there are at least $\gamma$ singletons. 

\begin{defn}
Let $b$ denote the number of non-singleton parts of $G$.
\end{defn}

\begin{prop}
\label{gammasing}
$G$ contains at least $\gamma$ singletons.  
\end{prop}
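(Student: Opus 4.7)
My plan is to argue by contradiction, supposing that $G$ has only $s$ singletons with $s \leq \gamma - 1$. Since there are fewer than $\gamma$ singletons, no colour can appear in $\gamma$ singleton lists, so no colour is frequent among singletons; hence every frequent colour is globally frequent. By the assumption (following Lemmas~\ref{na} and~\ref{kfreqenough}) that $C$ contains fewer than $k$ frequent colours, $|F| \leq k - 1$, where $F$ denotes the set of frequent colours.

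The first key ingredient is a structural claim: for every non-singleton part $P$, $F \subseteq \bigcup_{u \in P} L(u)$. To prove this, suppose some $c \in F$ satisfies $c \notin L(u)$ for every $u \in P$, and pick any $v \in P$. By the maximality of $L$, there is an acceptable colouring $\phi$ of the list assignment obtained by adding $c$ to $L(v)$. Since $G$ is not $L$-colourable, $\phi(v) = c$. Because $\phi^{-1}(c)$ is a stable set of $G$ containing $v$, it lies entirely in $P$; but for every $u \in P \setminus \{v\}$ we have $c \notin L(u)$, forcing $\phi(u) \neq c$. Thus $\phi^{-1}(c) = \{v\}$, making $\phi$ a near-acceptable colouring for $L$, which contradicts Lemma~\ref{na}.

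Combining this structural claim with Lemma~\ref{common} (which gives $F \subseteq L(v)$ for each singleton $v$), I conclude that every frequent colour appears in every singleton's list and in at least one vertex of every non-singleton part. I would then use two double-counting arguments. First, summing $\sum_v |L(v)| \geq k(2k+1)$ and using $d_c \leq k$ for non-frequent $c$ (where $d_c = |\{v : c \in L(v)\}|$) gives the preliminary bound $|F|(k+1) \geq k \gamma$. Second, summing list sizes over singletons and using that each singleton has at least $k - |F|$ non-frequent colours in its list while each non-frequent colour appears in at most $\gamma - 1$ singleton lists yields $s(k - |F|) \leq (|C| - |F|)(\gamma - 1)$. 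The plan is to combine these with the structural claim (possibly applied part-by-part using the minimum non-singleton part-size bound derivable from Lemma~\ref{cap}) to force $|F| \geq k$, the desired contradiction.

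The main obstacle I anticipate is closing the gap between what pure counting yields, namely $|F| \geq k\gamma/(k+1)$, and the $|F| \geq k$ we actually need: since $\gamma \leq k$, these inequalities alone are insufficient. Overcoming this will likely require a finer distributional argument tracking how each frequent colour covers the non-singleton parts — e.g., using that $\sum_{v \in N} |L(v) \cap F| \geq b|F| + k\gamma$ (from the structural claim applied to each part together with the global double count) — or a careful case analysis on small values of $\gamma$ using that non-singleton parts have size at least $3$ whenever $\gamma \geq 2$.
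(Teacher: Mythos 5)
Your proposal correctly sets up the contradiction hypothesis ($s\leq\gamma-1$ singletons, hence every frequent colour is globally frequent and there are at most $k-1$ of them) and correctly identifies double counting of $\sum_{v}|L(v)|=\sum_{c}|N_B(c)|$ as the engine. But you yourself flag the gap, and it is real: your first count bounds $|N_B(c)|$ by $|V(G)|=2k+1$ for frequent colours, which only yields $|F|(k+1)\geq k\gamma$, i.e.\ $|F|\geq k\gamma/(k+1)$, and this cannot reach $|F|\geq k$ since $\gamma\leq k$. The auxiliary ingredients you propose to close the gap do not do so: your structural claim that $F\subseteq\bigcup_{u\in P}L(u)$ for every non-singleton part $P$ (whose proof via maximality of $L$ is fine, in the style of Lemma~\ref{common}) is a \emph{lower} bound on how often frequent colours appear, whereas what is needed is a better \emph{upper} bound; and your second count $s(k-|F|)\leq(|C|-|F|)(\gamma-1)$ points in the right direction but is not combined into a contradiction.

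The single missing observation is that Lemma~\ref{cap} already supplies the sharper upper bound. For \emph{every} colour $c$ and every non-singleton part $P$, the intersection $\bigcap_{u\in P}L(u)$ is empty, so $c$ is absent from the list of at least one vertex of $P$; hence $|N_B(c)|\leq |V(G)|-b\leq 2k+1-b$, where $b$ is the number of non-singleton parts. Running your first double count with this bound in place of $2k+1$ gives
\[k|V(G)|\leq k|C-F|+(2k+1-b)|F|=k|C|+(k+1-b)|F|,\]
so $|F|\geq k\gamma/(k+1-b)$. Now the contradiction hypothesis is exactly that the number of singletons $k-b$ satisfies $k-b\leq\gamma-1$, i.e.\ $k+1-b\leq\gamma$, whence $|F|\geq k\gamma/\gamma=k$, contradicting $|F|\leq k-1$. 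This is the paper's argument; it needs neither your structural claim, nor the second count, nor any case analysis on small $\gamma$.
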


\begin{proof}
Suppose to the contrary that the number of singletons, namely $k-b$, is less than $\gamma$. Let $F'$ denote the set of all globally frequent colours. Then each colour of $C-F'$ is available for at most $k$ vertices, and each colour of $F'$ is available for at most $|V(G)|-b\leq 2k+1-b$ vertices by Lemma~\ref{cap}. Thus,
\[k|V(G)|\leq\sum_{v\in V(G)}|L(v)|=\sum_{c\in C}\left|N_B(c)\right|\]
\[\leq k|C-F'|+(2k+1-b)|F'|=k|C|+(k+1-b)|F'|.\]
In other words,
\begin{equation}\label{glob}|F'|\geq \frac{k(|V(G)|-|C|)}{k+1-b}=\frac{k\gamma}{k+1-b}.\end{equation}
Now, since we are assuming that $k+1-b\leq \gamma$, we obtain $|F'|\geq k$ by the above inequality. However, this contradicts the fact that there are fewer than $k$ frequent colours. Thus, $G$ must contain at least $\gamma$ singletons.
\end{proof}

\begin{cor}
\label{gamma+b}
$\gamma+b\leq k$.
\end{cor}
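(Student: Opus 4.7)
The plan is to observe that Corollary~\ref{gamma+b} follows almost immediately from Proposition~\ref{gammasing}. Since $G$ has exactly $k$ parts, and $b$ of them are non-singleton by definition, the number of singleton parts of $G$ is precisely $k - b$.

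By Proposition~\ref{gammasing}, we have at least $\gamma$ singletons, so $k - b \geq \gamma$. Rearranging yields $\gamma + b \leq k$, which is the desired inequality. There is no real obstacle here; the corollary is just a restatement of Proposition~\ref{gammasing} using the notation $b$ for the number of non-singleton parts, and the entire proof fits on a single line.
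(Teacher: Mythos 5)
Your proof is correct and is exactly the argument the paper gives: Proposition~\ref{gammasing} yields $k-b\geq\gamma$ since the singletons number $k-b$, and rearranging gives the corollary. Nothing further is needed.
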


\begin{proof}
By Proposition~\ref{gammasing}, we have $k-b\geq \gamma$, which implies the result. 
\end{proof}

\begin{cor}
\label{freqequiv}
A colour $c\in C$ is frequent if and only if it is available for every singleton.
\end{cor}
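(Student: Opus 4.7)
The plan is to handle the two implications separately, each using one of the results just established. The forward direction is exactly Lemma~\ref{common}: if $c$ is frequent, then $c\in L(v)$ for every singleton $v$ of $G$, so $c$ is available for every singleton.

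For the reverse direction, suppose $c\in C$ is available for every singleton of $G$. By Proposition~\ref{gammasing}, $G$ contains at least $\gamma$ singletons, so $c$ belongs to the lists of at least $\gamma$ singletons. By definition, this makes $c$ frequent among singletons, and hence frequent.

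There is essentially no obstacle here; the corollary is a direct combination of the two preceding results, and its role is simply to record a convenient equivalent characterisation of frequency that will be used freely in the sequel.
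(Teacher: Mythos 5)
Your proof is correct and is essentially identical to the paper's: the forward implication is Lemma~\ref{common}, and the reverse implication combines Proposition~\ref{gammasing} (at least $\gamma$ singletons exist) with the definition of ``frequent among singletons.'' Nothing to add.
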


\begin{proof}
By Proposition~\ref{gammasing}, any colour which is available for every singleton is automatically frequent among singletons, and therefore frequent. The reverse implication follows from Lemma~\ref{common}.
\end{proof}

By Lemma~\ref{kfreqenough} and Corollary~\ref{freqequiv}, to obtain the desired contradiction, we need only show that there are $k$ colours which are available for every singleton. In fact, as the next result shows, it is enough to prove that there are at least $b$ such colours. 

\begin{lem}
\label{btok}
There are fewer than $b$ frequent colours.
\end{lem}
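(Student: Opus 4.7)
To prove Lemma~\ref{btok}, my plan is to argue by contradiction: assume that there are at least $b$ frequent colours, fix any subset $F'$ of exactly $b$ frequent colours, and construct a near-acceptable colouring of $G$ for $L$, which by Lemma~\ref{na} would contradict $G$ being a counterexample. The construction will mirror the three-phase greedy procedure from the proof of Lemma~\ref{kfreqenough}, with the set $F'$ (of size $b$) playing the role there played by the set of $k$ frequent colours. In Phase~1, choose $V_1\subseteq V(G)$ and an acceptable partial colouring $f_1\colon V_1\to C\setminus F'$ with $|V_1|$ maximum and, subject to that, the number of parts meeting $V_1$ maximum. In Phase~2, having labelled the parts so that $|R_{P_1}|\geq\cdots\geq|R_{P_k}|$ where $R_P:=P\setminus V_1$, attempt for each $j=1,2,\ldots$ to monochromatically colour $R_{P_j}$ with some unused colour of $F'$ that lies in $L(v)$ for every $v\in R_{P_j}$; let $i$ denote the largest index at which this succeeds (so Phase~2 fails at $P_{i+1}$, or $i=k$ in which case we are already done). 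In Phase~3, assign the remaining vertices $V_3:=\bigcup_{j>i}R_{P_j}$ distinct unused colours of $F'$ as singleton colour classes; this will succeed precisely when $|V_3|\leq b-i$.

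The heart of the proof is therefore to show $|V_3|\leq b-i$. Assume otherwise, so $|V_3|\geq b-i+1$. Observations (\ref{noti+1}) and (\ref{>=j}) from the proof of Lemma~\ref{kfreqenough} apply unchanged here, so every colour $c\in C\setminus F'$ appearing in the lists of $j$ vertices of $R_{P_{i+1}}$ must be used on at least $j$ vertices of $V_1$. Since each of the $b-i$ unused frequent colours is absent from at least one list in $R_{P_{i+1}}$, a short double count using $|L(v)|\geq k$ and $|L(v)\cap F'|\leq b$ yields
\[ |V_1| \;\geq\; |R_{P_{i+1}}|(k-b) + (b-i). \]
Combining this with $|V_2|\geq i\cdot|R_{P_{i+1}}|$ (from the ordering), the assumed $|V_3|\geq b-i+1$, and $|V_1|+|V_2|+|V_3|=2k+1$, after simplification one obtains $|R_{P_{i+1}}|(k-b+i)\leq 2(k-b+i)$, and hence $|R_{P_{i+1}}|\leq 2$.

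I expect the main obstacle to be the tight case analysis when $|R_{P_{i+1}}|\in\{1,2\}$: all the inequalities above are then forced to essentially equalities, and the structure of $V_1$ becomes largely determined. Claim~\ref{2hit} will yield $|P_{i+1}|\geq 3$, while the tightness of the double count pins down the multiplicities with which colours of $C\setminus F'$ appear in $V_1$. Following the endgame of the proof of Lemma~\ref{kfreqenough}, I plan to combine this tightness with Corollary~\ref{gamma+b} ($\gamma+b\leq k$) and a pigeonhole count on the total colour occurrences in the lists of vertices of an appropriately chosen part, in order to produce a frequent colour that is available on two vertices of the same part. Using this colour as a non-singleton colour class of size two in a modified Phase~3 saves one unit in the frequent-colour budget, lowering the required count to $|V_3|-1\leq b-i$ and completing the near-acceptable colouring -- the desired contradiction.
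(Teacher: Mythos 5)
Your plan has a genuine gap: rerunning the three-phase greedy procedure with a set $F'$ of only $b$ frequent colours does not work, because the frequent-colour budget available in Phase~3 can be far smaller than $|V_3|$. Your counting correctly yields $|R_{P_{i+1}}|(k-b+i)\leq 2(k-b+i)$ and hence $|R_{P_{i+1}}|\leq 2$, but this is not a contradiction, and the claim that the remaining cases are ``forced to essentially equalities'' fails when $|R_{P_{i+1}}|=1$: there the combined inequality is strictly slack, nothing about $V_1$ is pinned down, and $|V_3|$ can be as large as roughly $k-i$ (one leftover vertex in each unprocessed part, including many singletons that Phase~2 never reached) while only $b-i$ colours of $F'$ remain unused. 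Your proposed endgame---producing one frequent colour usable on two vertices of a single part---saves exactly one unit of the budget, so it can only absorb an excess of one vertex; that is precisely the situation in Lemma~\ref{kfreqenough}, where $|F|=k$ forces $|V_3|=|F|+1$, but here the deficit can be of order $k-b$. A secondary issue: Claim~\ref{2hit} is internal to the proof of Lemma~\ref{kfreqenough} and its proof uses $|F|=k$ (via $|C-F|=k$), so it does not transfer verbatim to a palette of size $b$; you would want Corollary~\ref{size2}, whose proof is independent of the present lemma, instead.

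The paper closes this gap with a different idea entirely: rather than colouring greedily with $b$ frequent colours, it manufactures $k$ of them. Starting from $b$ frequent colours $A_b=\{c_1,\dots,c_b\}$ (all available on every singleton by Lemma~\ref{common}), it greedily extends to a set $A_k$ by adding one new colour $c_i\in L(v_i)-A_{i-1}$ for each singleton $v_i$, and replaces every singleton's list by the common list $A_k$. Under the modified assignment each colour of $A_k$ lies on all $k-b\geq\gamma$ singletons, so all $k$ of them are frequent and Lemmas~\ref{na} and~\ref{kfreqenough} produce an acceptable colouring $f'$ for the modified lists. This colouring is then pulled back to $L$ by permuting colours on the singletons: each $v_i$ whose designated colour $c_i$ appears in $f'(\mathcal{S})$ receives $c_i\in L(v_i)$, and the remaining singletons receive the colours of $f'(\mathcal{S})\cap A_b$, which are genuinely available on every singleton. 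If you want to complete your proof, this list-modification step is the missing ingredient; the direct greedy route does not appear salvageable.
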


\begin{proof}
Suppose that there are at least $b$ frequent colours and let $A_b=\{c_1,c_2,\dots,c_b\}$ be a set of $b$ such colours. By Lemma~\ref{common}, all such colours are available for every singleton. Label the singletons of $G$ as $v_{b+1},v_{b+2},\dots,v_{k}$. For each $i\in\{b+1,b+2,\dots,k\}$, in turn, choose a colour $c_i\in L(v_i)-A_{i-1}$ greedily and define $A_i:=A_{i-1}\cup\{c_i\}$. Let $L'$ be a list assignment of $G$ defined by
\[L'(v):=\left\{\begin{array}{ll}	A_{k} 		& \text{if } v \text{ is a singleton}, \\
																	L(v) 	& \text{otherwise}.\end{array}\right.\]
Clearly $L'$ assigns each singleton the same list of $k$ colours. Hence, there are at least $k$ frequent colours under $L'$ and so by Lemmas~\ref{na} and~\ref{kfreqenough} there is an acceptable colouring $f'$ for $L'$. We use this to construct an acceptable colouring $f$ for $L$, contradicting the fact that $G$ is not $L$-colourable. 

We let  $\mathcal{S}$ denote the set of all singletons and for each $v\in V(G)-\mathcal{S}$ we set $f(v)=f'(v)$. We note that $f'(\mathcal{S})$ is a set of exactly $k-b$ colours of $A_k$ disjoint from $f'(V(G)-\mathcal{S})=f(V(G)-\mathcal{S})$. We let \[\mathcal{S}':=\{v_i\in \mathcal{S}:c_i\in f'(\mathcal{S})\}.\]
We note that $|\mathcal{S}'| = k-b-|f'(\mathcal{S})\cap A_b|$ and hence $|\mathcal{S}-\mathcal{S'}| = |f'(\mathcal{S})\cap A_b|$. For each $v_i\in \mathcal{S}'$, we set $f(v_i)=c_i$. We arbitrarily choose a bijection $\pi:\mathcal{S}-\mathcal{S}'\to f'(S) \cap A_b$ and set $f(v)=\pi(v)$ for every $v\in \mathcal{S}-\mathcal{S}'$. Since each colour in $A_b$  is available for every singleton, we are done.
\end{proof}

Before moving on to the final counting arguments, we establish a few simple consequences of Proposition~\ref{gammasing}. 

\begin{cor}
\label{size2}
$G$ contains no part of size 2. 
\end{cor}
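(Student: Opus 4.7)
The plan is to argue by contradiction. Assume $G$ contains a part $P = \{u, v\}$ of size exactly two, and deduce the existence of at least $k$ frequent colours, which by Lemma~\ref{kfreqenough} and Lemma~\ref{na} will contradict the assumption that $G$ is not $L$-colourable.

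First, I would apply Lemma~\ref{cap} to the part $P$ to conclude $L(u) \cap L(v) = \emptyset$. Then Corollary~\ref{disjoint} gives $L(u) \cup L(v) = C$ and $|C| = 2k$. Combined with Corollary~\ref{2k+1}, which tells us $|V(G)| = 2k+1$, this yields
\[
\gamma \;=\; |V(G)| - |C| \;=\; (2k+1) - 2k \;=\; 1.
\]

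The key observation is that with $\gamma = 1$, the definition of \emph{frequent among singletons} collapses: any colour appearing in the list of even a single singleton is automatically frequent among singletons, and hence frequent. Now Proposition~\ref{gammasing} guarantees the existence of at least $\gamma = 1$ singleton in $G$; fix such a singleton $w$. Every one of the $\geq k$ colours in $L(w)$ therefore appears on at least $\gamma = 1$ singleton, so $L(w)$ consists entirely of frequent colours. Hence $C$ contains at least $k$ frequent colours.

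Applying Lemma~\ref{kfreqenough} then produces a near-acceptable colouring for $L$, which by Lemma~\ref{na} can be modified into a genuine acceptable colouring for $L$. This contradicts the standing assumption that $G$ is not $L$-colourable, completing the proof. There is no real obstacle in this argument; it is a short chain of invocations of earlier results. The only point worth highlighting is the way $\gamma = 1$ trivializes the frequent-among-singletons condition, which is what makes the presence of a part of size two incompatible with the minimality of the counterexample.
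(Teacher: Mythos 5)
Your proposal is correct and follows essentially the same argument as the paper: Lemma~\ref{cap} and Corollary~\ref{disjoint} force $\gamma=1$, Proposition~\ref{gammasing} supplies a singleton whose entire list is then frequent, contradicting the fact (from Lemmas~\ref{na} and~\ref{kfreqenough}) that there are fewer than $k$ frequent colours. The only difference is that you unwind that last fact explicitly into its constituent lemmas, which the paper leaves implicit.
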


\begin{proof}
If $G$ contains a part $P=\{u,v\}$, then $L(u)\cap L(v)=\emptyset$ by Lemma~\ref{cap}. By Corollary~\ref{disjoint}, this implies that $|C|=2k$ and so $\gamma=1$. Thus, by Proposition~\ref{gammasing}, $G$ contains a singleton $v$ and every colour of $L(v)$ is frequent among singletons since $\gamma=1$. This contradicts the fact that there are fewer than $k$ frequent colours, and proves that $G$ contains no part of size two.
\end{proof}

\begin{cor}
\label{k+1/2}
$b\leq \frac{k+1}{2}$.
\end{cor}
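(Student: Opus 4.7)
The plan is a direct counting argument combining Corollary~\ref{2k+1} with Corollary~\ref{size2}. We know that $|V(G)|=2k+1$ and that $G$ has no part of size $2$, so every non-singleton part has at least three vertices. There are $b$ non-singleton parts and $k-b$ singleton parts.

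First I would bound $|V(G)|$ from below by summing the sizes of the parts, using the size-$\geq 3$ lower bound on non-singleton parts and size $1$ on singleton parts. This yields
\[|V(G)|\ \geq\ 3b+(k-b)\ =\ 2b+k.\]
Combining with $|V(G)|=2k+1$ gives $2b+k\leq 2k+1$, hence $b\leq \frac{k+1}{2}$.

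There is no real obstacle here; the whole content of the corollary is that, once we have ruled out parts of size $2$ (the hard work, done in Corollary~\ref{size2} using Proposition~\ref{gammasing}), the remaining statement follows from a one-line inequality on part sizes.
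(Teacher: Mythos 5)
Your proof is correct and is essentially identical to the paper's: both count $k-b$ singletons plus $b$ parts of size at least three (using Corollary~\ref{size2}) to get $(k-b)+3b\leq|V(G)|\leq 2k+1$ and hence $b\leq\frac{k+1}{2}$.
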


\begin{proof}
Since $G$ contains no parts of size 2, we see that $G$ consists of $k-b$ singletons and $b$ parts of size at least three. Therefore
\[(k-b)+3b\leq |V(G)|\leq 2k+1\]
which implies that $b\leq\frac{k+1}{2}$.
\end{proof}

Recall that the set $F'$ of globally frequent colours satisfies
\[\frac{k\gamma}{k+1-b}\leq |F'| \leq b-1 < \frac{k}{2}\]
by (\ref{glob}), Lemma~\ref{btok} and Corollary~\ref{k+1/2}. This implies that
\begin{equation}\label{new1}2\gamma < k+1-b.\end{equation}
We will apply this inequality later on in the proof.

\subsection{A Bit of Counting} 

The final step is to apply a counting argument to show that there are at least $b$ colours that are frequent among singletons, which would contradict Lemma~\ref{btok} and complete the proof of Ohba's Conjecture.  In order to do so, we will find a fairly large set $X$ of singletons such that $N_B(X)$ is fairly small. This implies that the average number of singletons for which a colour in $N_B(X)$ is available is large. Using this, we will show that there are at least $b$ colours in $N_B(X)$ which are available for $\gamma$ singletons, which gives us the desired contradiction. We begin with the following proposition.

\begin{prop}
\label{strong}
Let $c^*$ be a colour which is not available for every singleton. Then there is a set $X$ (depending on $c^*$) of singletons such that
\begin{enumerate}[(a)]
\item \label{Xgamma}$|X|\geq k+1-b-\gamma$, and
\item\label{NX} $\left|\bigcup_{v\in X}L(v)\right|\leq2k-\left|N_B(c^*)\right|$.
\end{enumerate}
\end{prop}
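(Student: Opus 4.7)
The plan is to prove Proposition~\ref{strong} by first extracting structural constraints from the non-frequency of $c^*$, and then splitting into two cases based on the size of $n^* := |N_B(c^*)|$.

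First, by Corollary~\ref{freqequiv}, the hypothesis that $c^*$ is not available for every singleton implies that $c^*$ is not frequent. This means $n^* \leq k$ (since $c^*$ is not globally frequent) and $|N_B(c^*) \cap \mathcal{S}| \leq \gamma - 1$ (since $c^*$ is not frequent among singletons), where $\mathcal{S}$ denotes the set of singletons of $G$. Let $U := \mathcal{S} - N_B(c^*)$. Then $|U| \geq (k-b) - (\gamma-1) = k+1-b-\gamma$, and every $v \in U$ satisfies $L(v) \subseteq C - \{c^*\}$. In particular, any $X \subseteq U$ with $|X| = |U|$ (or indeed with $|X| \geq k+1-b-\gamma$) immediately satisfies (a).

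In the easy case $n^* \leq \gamma$, I would take $X := U$. Then $\bigcup_{v \in X} L(v) \subseteq C - \{c^*\}$, so $|\bigcup_{v \in X} L(v)| \leq |C| - 1 = 2k - \gamma \leq 2k - n^*$, verifying (b).

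For the hard case $n^* > \gamma$, the target bound $2k - n^*$ is strictly smaller than the trivial bound $2k - \gamma$, so we must exhibit at least $n^* - \gamma$ colours of $C - \{c^*\}$ that are missing from every $L(v)$ with $v \in X$. My plan is to apply Hall's Theorem (equivalently, K\"onig's Theorem) to the bipartite subgraph $H$ of $B$ on $U$ and $C - \{c^*\}$: we seek an independent set with at least $k+1-b-\gamma$ vertices on the $U$-side and at least $n^* - \gamma$ on the $C-\{c^*\}$-side. Candidate ``missing'' colours come from Proposition~\ref{inj}: since the injection $h : C \to V(G)$ misses only $\gamma$ vertices, at least $n^* - \gamma$ vertices of $N_B(c^*)$ lie in $h(C)$, and their preimages under $h$ produce $n^* - \gamma$ distinct colours of $C - \{c^*\}$ each matched to a vertex in $N_B(c^*)$.

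The main obstacle is showing that these candidate colours can be taken to avoid the lists of vertices in $X$ for an appropriate choice of $X \subseteq U$. My plan for this is a swap/augmenting-path argument on the matching $h$: any incidence of a candidate colour with $U$ would, together with maximality of $L$ and the restriction on frequent colours coming from Lemmas~\ref{common} and \ref{btok}, allow one to re-route $h$ in a way that would contradict the non-existence of an acceptable colouring. Combining the resulting $n^* - \gamma$ colours with $c^*$ itself yields $|C - \bigcup_{v \in X} L(v)| \geq n^* - \gamma + 1$, which is precisely (b).
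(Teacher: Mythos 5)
Your reduction to the case $n^*>\gamma$ is fine as far as it goes (the easy case $n^*\leq\gamma$ is correct, and so is the count $|U|\geq k+1-b-\gamma$), but that easy case is essentially vacuous for the purposes of the paper: in the application $c^*$ is chosen to maximize $\left|N_B(c^*)\right|$ and the whole point is that $n^*$ is close to $k$. The entire content of Proposition~\ref{strong} lives in your ``hard case,'' and there you have not given a proof --- you have given a plan whose key step is asserted, not established. Concretely, you must produce $n^*-\gamma$ colours of $C-\{c^*\}$ that are absent from \emph{every} list of a set $X$ of $k+1-b-\gamma$ singletons. Your candidate colours are the $h$-preimages of vertices of $N_B(c^*)$, but there is no reason these colours avoid the lists of $U$, and the proposed remedy --- ``re-route $h$'' to reach a contradiction --- does not work as stated: re-routing $h$ only produces another matching of $B$ saturating $C$, and such a matching already exists by Proposition~\ref{inj} without contradicting anything, because $|V(G)|>|C|$ and the leftover $\gamma$ vertices still need to be coloured consistently with their parts. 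No contradiction with ``$G$ is not $L$-colourable'' follows from a mere incidence between a candidate colour and a vertex of $U$.

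The paper closes exactly this gap by a different mechanism, which your proposal does not reconstruct: use maximality of $L$ to obtain a colouring $f$ that is acceptable except at a single singleton $x$ coloured $c^*$ (and surjective, by Proposition~\ref{surj}), pass to the contracted bipartite graph $B_f$, and take a Hall deficiency set $S$ maximizing $|S|-\left|N_{B_f}(S)\right|$. The set $X$ is then the set of singletons whose colour classes lie in $S$; the bound (b) comes from the fact that every colour class in $S$ contains a vertex missing $c^*$ from its list, so $\left|N_{B_f}(S)\right|<|S|\leq |V(G)|-\left|N_B(c^*)\right|$, while the bound (a) comes from Claim~\ref{useagain}: if too few singletons landed in $S$, conditions (\ref{match}) and (\ref{singgg}) would hold and $G$ would be $L$-colourable. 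Your argument never invokes anything like Claim~\ref{useagain}, which is the tool that converts ``$X$ is too small'' into an actual acceptable colouring; without it, or a worked-out substitute for your swapping argument, the hard case remains unproved.
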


\begin{proof}
To prove this result, we modify a colouring which is almost acceptable as in the proof of Lemma~\ref{na}. To begin, we let  $x$ be a singleton such that $c^*\notin L(x)$ and define a list assignment $L^*$ by
\[L^*(v)=\left\{\begin{array}{ll} 	L(x)\cup\{c^*\} 	& \text{if } v=x,\\
																	L(v)						& \text{otherwise}.\end{array}\right.\]

Then, since $L$ is maximal there is an acceptable colouring $f$ for $L^*$. Clearly $f^{-1}(c^*)=\{x\}$ and for every $v\in V(G)-x$ we have $f(v)\in L(v)$. By Proposition~\ref{surj}, we can also assume that $f$ is surjective. If there is a matching in $B_f$ which saturates $V_f$, then $G$ is $L$-colourable. Thus, we obtain a set $S\subseteq V_f$ such that $\left|N_{B_f}(S)\right|<|S|$. We can choose $S$ to maximize $|S|-\left|N_{B_f}(S)\right|$. By this choice, we have that (\ref{match}) is satisfied.

Since $\left|N_{B_f}(S)\right|<|S|$ and $c$ is adjacent to $f^{-1}(c)$ in $B_f$ for every $c\neq c^*$ it must be the case that $f^{-1}(c^*)\in S$ and $c^*\notin N_{B_f}(S)$. Since $c^*\notin N_{B_f}(S)$, every colour class of $S$ must contain a vertex whose list does not contain $c^*$. Thus, we have
\[\left|N_{B_f}(S)\right|<|S|\leq |V(G)|-\left|N_B(c^*)\right|\leq2k+1-\left|N_B(c^*)\right|.\]
Now, define $X$ to be the set of all singletons of $G$ whose colour classes under $f$ belong to $S$. Then $\bigcup_{v\in X}L(v)\subseteq N_{B_f}(S)$, and so (\ref{NX}) holds. 

Finally, if $X$ contains fewer than $k+1-b-\gamma$ singletons, then there are at least $\gamma$ singletons $y$ such that $\{y\}\in V_f-S$. That is, (\ref{singgg}) is satisfied. However, if this were the case, then we would obtain an acceptable colouring for $L$ by Claim~\ref{useagain}. Thus, (\ref{Xgamma}) must hold. 
\end{proof}

We let $c^*$ be a colour which is not frequent (equivalently, not available for every singleton), and subject to this maximizes $\left|N_B(c^*)\right|$. Let $X$ be a set of singletons as in Proposition~\ref{strong}.

Let $Z$ be a set of $b-1$ colours that appear most often in the lists of vertices of $X$ and define $Y:=N_B(X)-Z$. That is, $|Z|=b-1$ and if $c_1\in Z$ and $c_2\in Y$, then $\left|N_B(c_1)\cap X\right|\geq \left|N_B(c_2)\cap X\right|$. We can further assume that every frequent colour appears in $Z$, since by Lemma~\ref{common} these colours appear in the list of every vertex of $X$ and by Lemma~\ref{btok} there are at most $b-1$ of them. Finally, let $c'\in Y$ such that $\left|N_B(c')\cap X\right|$ is maximized. Our goal is to prove that $\left|N_B(c')\cap X\right|\geq\gamma$, contradicting the fact that $Z$ contains every frequent colour. The next proposition provides one way of doing this. 

\begin{defn}
$\beta:= k- \left|N_B(c^*)\right|$.
\end{defn}

\begin{rem}
\label{beta>0}
Since $c^*$ is not frequent, we have $\beta\geq0$. 
\end{rem}

\begin{prop}
\label{c'c*}
If $\beta\leq 2(k+1-b-2\gamma)$,  then $\left|N_B(c')\cap X\right|\geq \gamma$. 
\end{prop}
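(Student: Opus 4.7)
The plan is to double-count edges of $B$ between $X$ and $N_B(X)=Y\cup Z$, using the maximality of $c'$ in $Y$ to control the contribution from $Y$, and then to feed in the bounds from Proposition~\ref{strong} and the hypothesis $\beta\le 2(k+1-b-2\gamma)$ to finish via an elementary quadratic inequality.

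More concretely, I would first observe that since each $v\in X$ has $|L(v)|\ge k$, the number of edges of $B$ incident to $X$ is at least $k|X|$. On the other hand, each colour $c\in Z$ contributes at most $|X|$ such edges, and by the choice of $c'$ each colour $c\in Y$ contributes at most $|N_B(c')\cap X|$ edges. Combining, and using $|Z|=b-1$, this yields
\[
(k+1-b)|X|\ \le\ |Y|\cdot\bigl|N_B(c')\cap X\bigr|.
\]
Next I would bound $|Y|$ from above using part~(\ref{NX}) of Proposition~\ref{strong}: since $N_B(X)\subseteq \bigcup_{v\in X}L(v)$,
\[
|Y|=|N_B(X)|-(b-1)\ \le\ (2k-|N_B(c^*)|)-(b-1)\ =\ (k+1-b)+\beta,
\]
and combine this with the lower bound $|X|\ge k+1-b-\gamma$ from part~(\ref{Xgamma}).

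Writing $m:=k+1-b$ for brevity, the two displayed inequalities give
\[
\bigl|N_B(c')\cap X\bigr|\ \ge\ \frac{m(m-\gamma)}{m+\beta},
\]
provided $|Y|>0$; this is justified because $|N_B(X)|\ge k>b-1$ by Corollary~\ref{k+1/2}, so $Y$ is nonempty and $c'$ indeed exists. The remaining step is to show the right-hand side is at least $\gamma$, i.e.\ $m(m-\gamma)\ge \gamma(m+\beta)$, equivalently $m^2\ge 2m\gamma+\gamma\beta$. This is where the hypothesis enters: $\beta\le 2(m-2\gamma)$ gives $\gamma\beta\le 2m\gamma-4\gamma^2$, so it suffices to check $m^2\ge 4m\gamma-4\gamma^2$, and this is just $(m-2\gamma)^2\ge 0$.

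The only mildly delicate point I expect is justifying the factors in the double count and making sure the resulting arithmetic does not become degenerate; note that $m-\gamma>\gamma\ge 0$ by the inequality $2\gamma<k+1-b$ recorded in (\ref{new1}), and $m+\beta>0$ by Remark~\ref{beta>0} and $m>0$, so all the quantities involved are positive and the chain of inequalities is valid. Once those are in place the proposition follows immediately, and the conclusion $|N_B(c')\cap X|\ge\gamma$ will contradict the hypothesis that $Z$ already contains every frequent colour.
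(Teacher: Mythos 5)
Your proposal is correct and follows essentially the same route as the paper: the same double count of $B$-edges between $X$ and $Y$ using the maximality of $c'$, the same bounds on $|X|$ and $|Y|$ from Proposition~\ref{strong}, and the same reduction to an elementary inequality in $m=k+1-b$, $\gamma$, $\beta$. The only (cosmetic) difference is the final algebra, where you clear denominators and invoke $(m-2\gamma)^2\geq 0$ instead of the paper's trick of multiplying numerator and denominator by $\gamma$; your version is marginally cleaner and equally valid.
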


\begin{proof}
Since $|Z|=b-1$, every vertex $v\in X$ must satisfy $|L(v)\cap Y|\geq k-(b-1) = k+1-b$. Thus, by our choice of $c'$,
\[|Y|\left|N_B(c')\cap X\right|\geq\sum_{c\in Y}\left|N_B(c)\cap X\right|=\sum_{v\in X}|L(v)\cap Y|\geq |X|(k+1-b).\]
Combining this inequality with the two bounds of Proposition~\ref{strong} gives
\[\left|N_B(c')\cap X\right|\geq \frac{|X|(k+1-b)}{|Y|}= \frac{|X|(k+1-b)}{\left|N_B(X)\right|+1-b}\]
\begin{equation}\label{hmm}\geq\frac{(k+1-b-\gamma)(k+1-b)}{2k-\left|N_B(c^*)\right|+1-b}=\frac{(k+1-b-\gamma)(k+1-b)}{\beta + k +1 -b}\end{equation}
However, if $\beta\leq 2(k+1-b-2\gamma)$ then, by (\ref{new1}) and Remark~\ref{beta>0}, we have $0\leq\beta\gamma < (k+1-b-2\gamma)(k+1-b)$. So, multiplying top and bottom by $\gamma>0$, the right hand side of (\ref{hmm}) is at least
\[\frac{\gamma(k+1-b-\gamma)(k+1-b)}{(k+1-b-2\gamma)(k+1-b)+ \gamma(k +1 -b)}=\gamma.\]
The result follows. 
\end{proof}

Together with Proposition~\ref{c'c*}, the following proposition completes the proof of Ohba's Conjecture (with a factor of four to spare). 

\begin{prop}
$\beta<\frac{1}{2}(k+1-b-2\gamma)$.
\end{prop}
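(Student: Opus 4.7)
The plan is a double-counting argument on the bipartite graph $B$. Starting from $|V(G)|=2k+1$ (Corollary~\ref{2k+1}), I note that $\sum_{c\in C}|N_B(c)|=\sum_{v}|L(v)|\ge k(2k+1)$. I would split the right-hand side according to whether a colour is frequent: Lemma~\ref{cap} together with Corollary~\ref{size2} forces every frequent colour to miss at least one vertex in each of the $b$ non-singleton parts, giving $|N_B(c)|\le 2k+1-b$; for non-frequent colours, the choice of $c^*$ gives $|N_B(c)|\le k-\beta$. Letting $f_\ell$ denote the number of frequent colours and substituting $|C|=2k+1-\gamma$, the resulting inequality rearranges to
\[\beta(2k+1-\gamma-f_\ell)\;\le\;f_\ell(k+1-b)-\gamma k.\]

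The coefficient of $\beta$ here is positive by Corollaries~\ref{gamma+b} and~\ref{k+1/2}, so this bounds $\beta$ from above. I would next show that this upper bound, viewed as a function of $f_\ell$, is increasing: a short calculation yields the derivative's numerator as $(k+1-b)(2k+1-\gamma)-\gamma k$, which is positive because (\ref{new1}) gives $k+1-b\ge 2\gamma+1$ and $\gamma\le k$ gives $2k+1-\gamma\ge k+1$. Thus Lemma~\ref{btok} in the form $f_\ell\le b-1$ yields the cleanest bound:
\[\beta\;\le\;\frac{(b-1)(k+1-b)-\gamma k}{2k+2-\gamma-b}.\]

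It then remains to verify that this bound is strictly smaller than $\tfrac12(k+1-b-2\gamma)$. Setting $m:=k+1-b$, clearing denominators and collecting terms reduces the task to showing that $3m^{2}-m(k+3\gamma-1)+2\gamma(\gamma-1)>0$. This algebraic step is where I expect the main obstacle to lie, as it requires combining both of our structural constraints simultaneously. Using $k\le 2m-1$ (from Corollary~\ref{k+1/2}), the expression is at least $m^{2}-(3\gamma-2)m+2\gamma(\gamma-1)$; using $m\ge 2\gamma+1$ (from (\ref{new1})) and noting that this quadratic in $m$ has its vertex at $(3\gamma-2)/2\le 2\gamma+1$, it is nondecreasing on $[2\gamma+1,\infty)$. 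A direct evaluation at $m=2\gamma+1$ gives the value $3\gamma+3>0$, which yields the required strict inequality and completes the proof.
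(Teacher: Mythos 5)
Your proposal is correct and follows essentially the same route as the paper: the identical double count over $B$ splitting colours into frequent and non-frequent, the same per-colour bounds from Lemma~\ref{cap} and the extremal choice of $c^*$, and the same intermediate bound $\beta\leq\frac{(b-1)(k+1-b)-\gamma k}{2k+2-\gamma-b}$ via $|F|\leq b-1$. The only difference is the last algebraic step, where the paper shortcuts via $(b-1)(k+1-b)-k\gamma<\tfrac{k}{2}(k+1-b-2\gamma)$ and $2k+2-\gamma-b\geq k+2$, while you clear denominators and analyse a quadratic in $m=k+1-b$; both verifications are valid.
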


\begin{proof}
Let $F$ denote the set of all frequent colours. Recall that $c^*$ was chosen to maximize $\left|N_B(c^*)\right|$ over all colours which are not frequent. Thus, each colour $c\notin F$ must have $\left|N_B(c)\right|\leq \left|N_B(c^*)\right|$. Moreover, by Lemma~\ref{btok} there are at most $b-1$ frequent colours and by Lemma~\ref{cap} every colour $c\in C$ satisfies $\left|N_B(c)\right|\leq |V(G)|-b$. We have $|V(G)|=2k+1$ by Corollary~\ref{2k+1} and so,
\[(2k+1)k\leq \sum_{v\in V(G)}|L(v)|=\sum_{c\in C}\left|N_B(c)\right|\leq |C-F|\left|N_B(c^*)\right| + |F|(2k+1-b)\]
\[\leq (2k+1-\gamma-b+1)\left|N_B(c^*)\right| + (b-1)(2k+1-b)\]
since $|C|=|V(G)|-\gamma$ and $|F|\leq b-1$ by Lemma~\ref{btok}. Substituting $\left|N_B(c^*)\right|=k-\beta$ and rearranging, we obtain
\[(2k+2-\gamma-b)\beta\leq(-\gamma - b+1)k + (b-1)(2k+1-b)\]
\[=(b-1)(k+1-b)-k\gamma\]
By Corollary~\ref{gamma+b} we have $\gamma+b<2k+2$. Thus, we can divide both sides of the above inequality by $(2k+2-\gamma-b)$ to obtain
\[\beta\leq \frac{(b-1)(k+1-b) -k\gamma}{2k+2-\gamma-b}<\frac{\frac{1}{2}k(k+1-b-2\gamma)}{2k+2-\gamma-b}\]
since $b-1<\frac{k}{2}$ by Corollary~\ref{k+1/2}. Now, by Corollary~\ref{gamma+b}, we obtain
\[\frac{\frac{1}{2}k(k+1-b-2\gamma)}{2k+2-\gamma-b} \leq \frac{\frac{1}{2}k(k+1-b-2\gamma)}{k+2}< \frac{1}{2}(k+1-b-2\gamma)\]
which implies the result.
\end{proof}

This completes the proof of Ohba's Conjecture.

\section{Conclusion}

We conclude the paper by mentioning some subsequent work and open problems for future study. In general, one may ask the following: \emph{given a function $f(k)>2k+1$, what is the best bound on $\chi_\ell(G)$ for $k$-chromatic graphs on at most $f(k)$ vertices?} By applying the result of the current paper, Noel et al.~\cite{NWWZ} have solved this problem for every function $f$ such that $f(k)\leq 3k$ and $f(k)-k$ is even for all $k$. Their main result is the following strengthening of Ohba's Conjecture, which holds for all graphs $G$:
\begin{equation}\label{NWWZ}\chi_\ell(G)\leq\max\left\{\chi(G),\left\lceil\frac{|V(G)|+\chi(G)-1}{3}\right\rceil\right\}.\end{equation}
Ohba~\cite{3Ohba} proved that equality holds in (\ref{NWWZ}) whenever $G$ is a complete multipartite graph in which every part has size $1$ or $3$, generalizing a result of Kierstead~\cite{Kierstead} for $K_{3,3,\dots,3}$. Putting this together, we see that the list chromatic number of a graph on at most $3\chi(G)$ vertices is bounded above by the list chromatic number of the complete $\chi(G)$-partite graph in which every part has size $3$. Noel~\cite{Noel} has conjectured that a similar result holds for graphs on at most $m\chi(G)$ vertices for every fixed integer $m$. 

\begin{conj}[Noel~\cite{Noel}]
\label{m*k}
For $m,k\geq 2$, let $G$ be a $k$-chromatic graph on at most $mk$ vertices. Then $\chi_\ell(G)$ is bounded above by the list chromatic number of the complete $k$-partite graph in which every part has size $m$. 
\end{conj}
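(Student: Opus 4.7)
The plan is to mimic the architecture of the present paper---first reducing to complete multipartite graphs, then obstructing a minimal counterexample---but routed through a monotonicity principle that compares $G$ against the balanced graph $K_{m*k}$. As sanity checks, the case $m=2$ is Ohba's Conjecture itself (since $\chi_\ell(K_{2*k})=k$ by Erd\H{o}s--Rubin--Taylor), and the case $m=3$ follows from the strengthening~(\ref{NWWZ}) combined with Ohba's evaluation of $\chi_\ell(K_{3*k})$ in~\cite{3Ohba}; so the genuinely new content lies in $m\geq 4$, where no closed form for $\chi_\ell(K_{m*k})$ appears to be known.

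The first step is the standard reduction: adding an edge between two colour classes of a $\chi(G)$-colouring neither increases $\chi$ nor decreases $\chi_\ell$, so it suffices to prove the conjecture when $G$ is complete $k$-partite with part sizes $a_1\geq\cdots\geq a_k$ summing to at most $mk$. Set $\ell:=\chi_\ell(K_{m*k})$; we must show every list assignment $L$ on $G$ with $|L(v)|\geq\ell$ admits an acceptable colouring. The second and central step is a \emph{monotonicity principle}: if $G$ and $G'$ are complete $k$-partite graphs on at most $mk$ vertices whose part sequences satisfy that $(a_i')$ majorises $(a_i)$, then $\chi_\ell(G)\leq\chi_\ell(G')$. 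Granting this, Conjecture~\ref{m*k} follows, because $(m,m,\ldots,m)$ majorises every partition of at most $mk$ into $k$ non-negative parts (after padding with zeros). To prove the monotonicity principle I would induct on the majorisation order, reducing to a single elementary ``Robin Hood'' move: passing from $(a_1,\ldots,a_k)$ to $(a_1-1,a_2,\ldots,a_{k-1},a_k+1)$, where $a_1>a_k+1$.

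The hard part will be this elementary move. The natural strategy is to take a bad list assignment $L$ witnessing $\chi_\ell(G)\geq t$ on $G$, select the vertex $v$ being transferred from the large part to the small part, and manufacture a bad $L'$ on $G'$ with lists of size $t$ by modifying $L(v)$ using colours ``freed up'' by the departure of $v$ from its old part. The machinery of Section~\ref{nasect}---near-acceptable colourings and the auxiliary bipartite graph $B_f$---together with Hall's Theorem should let one verify non-colourability of $L'$, and the frequent-colour dichotomy of Section~\ref{proc} should adapt once the thresholds in the definitions of \emph{globally frequent} and \emph{frequent among singletons} are recalibrated from $k$ to $\ell$. I expect the chief obstacle to be parts of size greater than $m$: these do not occur in $K_{m*k}$ and so preclude any naive subgraph-monotonicity argument, and moreover Lemma~\ref{cap}'s bound on $|N_B(c)|$ becomes fragile once part sizes exceed $\ell$. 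Absent a clean monotonicity proof, the fallback would be to run the entire minimal-counterexample framework of Sections~\ref{minprop}--\ref{add} with threshold $\ell$ in place of $k$, re-deriving the analogues of Corollaries~\ref{<},~\ref{disjoint}, and~\ref{2k+1}, and closing with a counting argument in the spirit of Proposition~\ref{c'c*} calibrated so that any putative counterexample would violate the implicit value of $\ell=\chi_\ell(K_{m*k})$.
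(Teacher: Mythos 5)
You have set out to prove a statement that is, in fact, one of the open conjectures recorded in the paper's concluding section: the paper offers no proof of Conjecture~\ref{m*k}, and your proposal does not supply one either. It is a research plan whose load-bearing step is left as a further conjecture. The central gap is the ``monotonicity principle'': that among complete $k$-partite graphs on a fixed number of vertices, balancing the part sizes cannot decrease the list chromatic number. This is not known; it is essentially a strengthening of the paper's Conjecture~\ref{n/k}, which asserts only the \emph{existence} of a majorising extremal graph and is itself open. (A small but telling slip: you write that $(m,\dots,m)$ majorises every partition of $mk$ into $k$ parts, but the constant sequence is the \emph{minimum} of the majorisation order, so the principle you need is Schur-concavity of $\chi_\ell$ in the part sizes, not monotonicity along majorisation as stated.) Your reduction to the single ``Robin Hood'' move is fine as a strategy, but the move itself is where all the difficulty lives and you do not carry it out. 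Note the structural obstruction: passing from $(a_1,\dots,a_k)$ to $(a_1-1,\dots,a_k+1)$ both adds edges (between the transferred vertex and its former part) and deletes edges (between it and its new part), so neither graph is a subgraph of the other and there is no known recipe for transporting a bad list assignment from one to the other. Declaring that Hall's Theorem and the $B_f$ machinery ``should let one verify non-colourability of $L'$'' is not an argument.

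The fallback you describe---rerunning Sections~\ref{minprop}--\ref{add} with the threshold $k$ replaced by $\ell=\chi_\ell(K_{m*k})$---also cannot be executed as described. Those arguments are calibrated to the exact identity $|V(G)|=2\chi(G)+1=2k+1$ with list size $k$: Corollary~\ref{<} gives $|C|<2k+1$ because a matching saturating $C$ would otherwise be a bijection, Corollary~\ref{disjoint} needs two disjoint lists to exhaust $C$, and the final counting in Proposition~\ref{c'c*} uses the precise interplay between $|V(G)|$, the list size, and the number of parts. With $|V(G)|\leq mk$ for $m\geq 4$ and lists of size $\ell$, where $\ell$ has no known closed form and is strictly larger than $k$ by an amount that is itself only conjecturally understood, none of these inequalities survive recalibration in any mechanical way. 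Your sanity checks for $m=2$ (Ohba, i.e.\ the present paper) and $m=3$ (via~(\ref{NWWZ}) and Ohba's evaluation of $\chi_\ell(K_{3,\dots,3})$) are correct, but for $m\geq 4$ the proposal contains no step that could be checked, only a description of where a proof might be sought.
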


Noel~\cite{Noel} also proposed the following, more general, conjecture. 

\begin{conj}[Noel~\cite{Noel}]
\label{n/k}
For $n\geq k\geq 2$, there exists a graph $G_{n,k}$ such that
\begin{itemize}
\item $G_{n,k}$ is a complete $k$-partite graph on $n$ vertices,
\item $\alpha\left(G_{n,k}\right)=\left\lceil\frac{n}{k}\right\rceil$, and
\item if $G$ is a $k$-chromatic graph on $n$ vertices, then $\chi_\ell(G)\leq \chi_\ell\left(G_{n,k}\right)$. 
\end{itemize}
\end{conj}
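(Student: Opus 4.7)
The plan is a double-counting argument on the edges of the bipartite graph $B$ with bipartition $(V(G),C)$ defined in Section~\ref{minprop}. Counting edges from the $V(G)$ side gives
\[\sum_{v\in V(G)} |L(v)| \geq k|V(G)| = k(2k+1),\]
using $|V(G)|=2k+1$ from Corollary~\ref{2k+1}. Counting from the $C$ side, I would split the colours into the set $F$ of frequent colours and its complement. Lemma~\ref{cap} says that no non-singleton part can have a common colour in all its lists, so every $c\in C$ satisfies $|N_B(c)|\leq |V(G)|-b = 2k+1-b$; for $c\notin F$, the defining choice of $c^*$ yields the stronger bound $|N_B(c)|\leq |N_B(c^*)| = k-\beta$. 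The size of $F$ is controlled by Lemma~\ref{btok}, giving $|F|\leq b-1$, and since $|C|=2k+1-\gamma$ we have $|C-F|\geq 2k+2-\gamma-b$.

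Plugging these bounds in, and using that the coefficient $(2k+1-b)-(k-\beta) = k+1-b+\beta$ with which $|F|$ appears in the resulting expression is non-negative (so replacing $|F|$ by its upper bound $b-1$ only weakens the inequality in the right direction), I expect to arrive at
\[k(2k+1) \leq (b-1)(2k+1-b) + (2k+2-\gamma-b)(k-\beta).\]
Solving for $\beta$ and simplifying the numerator via $(b-1)(2k+1-b) = (b-1)(k+1-b) + k(b-1)$ should yield
\[\beta \leq \frac{(b-1)(k+1-b) - k\gamma}{2k+2-\gamma-b}.\]

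The final task is to bound this fraction by something strictly less than $\tfrac{1}{2}(k+1-b-2\gamma)$, which I would do by handling numerator and denominator separately. Corollary~\ref{k+1/2} gives $b-1 < k/2$, so the numerator is strictly less than $\tfrac{k}{2}(k+1-b) - k\gamma = \tfrac{k}{2}(k+1-b-2\gamma)$, and this quantity is positive by (\ref{new1}). Meanwhile Corollary~\ref{gamma+b} gives $2k+2-\gamma-b \geq k+2 > k$. Combining these yields
\[\beta < \frac{k}{k+2}\cdot\tfrac{1}{2}(k+1-b-2\gamma) < \tfrac{1}{2}(k+1-b-2\gamma).\]
The main obstacle I anticipate is purely bookkeeping: keeping track of the signs of $k+1-b-2\gamma$, $k+1-b+\beta$, and $2k+2-\gamma-b$ so that every multiplication and division preserves the direction of the inequality. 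All of the essential inputs ($|F|\leq b-1$, $b\leq (k+1)/2$, $\gamma+b\leq k$, $2\gamma<k+1-b$, and $\beta\geq 0$) have already been assembled in the preceding sections, so the argument reduces to organising them correctly.
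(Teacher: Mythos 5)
There is a fundamental mismatch here: the statement you were asked to prove is Conjecture~\ref{n/k}, an \emph{open conjecture} of Noel stated in the paper's concluding section. It asserts that for every $n\geq k\geq 2$ there is a single extremal complete $k$-partite graph $G_{n,k}$ on $n$ vertices, with $\alpha(G_{n,k})=\lceil n/k\rceil$, whose list chromatic number dominates that of \emph{every} $k$-chromatic graph on $n$ vertices. The paper offers no proof of this; it is posed as a direction for future work, and even the much more restricted case $n=2k+2$ is only conjecturally resolved in the discussion that follows it. Your argument does not engage with this statement at all: it never constructs or identifies a candidate $G_{n,k}$, never compares list chromatic numbers of two different graphs, and is confined to the regime $|V(G)|=2k+1$ of a minimal counterexample to Ohba's Conjecture, whereas the conjecture concerns arbitrary $n\geq k$.

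What you have actually written is a proof of the final (unnamed) Proposition of Section~\ref{add}, namely the bound $\beta<\frac{1}{2}(k+1-b-2\gamma)$, and as a proof of \emph{that} statement it is essentially identical to the paper's: the same edge count in $B$, the same split of $C$ into frequent and non-frequent colours, the same inputs $|F|\leq b-1$, $|N_B(c)|\leq 2k+1-b$, $|N_B(c^*)|=k-\beta$, and the same final estimates via Corollaries~\ref{k+1/2} and~\ref{gamma+b}. But all of the quantities you rely on ($\gamma$, $b$, $\beta$, $c^*$, the maximal list assignment $L$) are defined only inside the contradiction framework of Sections~\ref{minprop}--\ref{add} and have no meaning in the setting of Conjecture~\ref{n/k}. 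So the proposal, however correct on its own terms, proves the wrong statement; the target remains open.
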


Another, rather ambitious, problem could be to characterize all complete $k$-partite graphs with $\chi_\ell(G)=k$. Short of this, it would be interesting to characterize all such graphs on at most $f(k)$ vertices for some function $f$ which is larger than $2k+1$. In~\cite{Noel}, it is conjectured that if $G$ is a complete $k$-partite graph on $2k+2$ vertices, then $\chi_\ell(G)>k$ if and only if $k$ is even and either every part of $G$ has size $1$ or $3$, or every part of $G$ has size $2$ or $4$. The fact that such graphs are not chromatic-choosable was proved in~\cite{Examples}.

\begin{ack}
The authors would like to thank an anonymous referee for comments which helped to improve the exposition of this paper. 
\end{ack}

\bibliography{OhbaPaper}
  \bibliographystyle{amsplain}
  %\nocite{*}
\end{document}